\documentclass[12pt]{amsart}
\usepackage{amsmath}
\usepackage{amssymb}
\usepackage{amsrefs}
\usepackage{a4wide}
\usepackage{amscd} 
\usepackage{graphics}
\usepackage{color}
\usepackage[all]{xy}
\usepackage{amsthm} 

\newtheorem{theorem}{Theorem}[section]
\newtheorem*{theorem*}{Theorem}
\newtheorem{lemma}[theorem]{Lemma}
\newtheorem{corollary}[theorem]{Corollary}
\newtheorem{proposition}[theorem]{Proposition}
\theoremstyle{definition}
\newtheorem{definition}[theorem]{Definition}
\newtheorem{example}[theorem]{Example}
\theoremstyle{remark}
\newtheorem{remark}[theorem]{Remark}

\numberwithin{equation}{section}
\newcommand{\AProperClass}{\ensuremath{\mathcal{P}}} 
\newcommand{\To}{\longrightarrow}   


\newcommand{\Tor}{\operatorname{Tor}}

\newcommand{\Hom}{\operatorname{Hom}}
\newcommand{\Ext}{\operatorname{Ext}} 

\newcommand{\Image}{\operatorname{Im}} 
\newcommand{\Ker}{\operatorname{Ker}}

\newcommand{\Soc}{\operatorname{Soc}}
\newcommand{\coclosed}[2]{\ensuremath{\xymatrix@1{ #1 \, \ar@{^{(}->}[r]^-{cc} &
#2}}}
\newcommand{\cosmall}[3]{\ensuremath{\xymatrix@1{ #1 \,
\ar@{^{(}->}[r]^-{cs}_-{#2} & #3}}}

\newcommand{\ShortExactSequence}[5]{\ensuremath{\xymatrix@1{ 0 \ar[r] &  #1 \ar[r]^-{#2} & #3 \ar[r]^-{#4} &  #5  \ar[r] & 0 }}}

\newcommand{\LongExactSequence}[5]{\ensuremath{\xymatrix@1{ 0 \ar[r] & #5 \ar[r] &#1 \ar[r]^-{#2} & #3 \ar[r]^-{#4} &  #5  \ar[r] & 0 }}}

\newcommand{\AShortExactSequence}[6]{\ensuremath{\xymatrix@1{ #1: 0 \ar[r] &  #2 \ar[r]^-{#3} & #4 \ar[r]^-{#5} &  #6  \ar[r] & 0 }}}

\begin{document}

\author{Engin B\"{u}y\"{u}ka\c{s}{\i}k}\address{Izmir Institute of Technology, Department of Mathematics, 35430, Urla, Izmir, Turkey}\email{enginbuyukasik@iyte.edu.tr}

\author{Y{\i}lmaz Dur\u{g}un}
\address{Izmir Institute of Technology, Department of Mathematics,\\ G\"{u}lbah\c{c}ek\"{o}y\"{u}, 35430, Urla, Izmir, Turkey.}
\email{yilmazdurgun@iyte.edu.tr}

\title{Neat-flat modules}

\begin{abstract}Let $R$ be a ring and $M$ be a right $R$-module.
$M$ is called neat-flat if any short exact sequence of the form
$0\to K\to N\to M\to 0$ is neat-exact i.e. any homomorphism from a
simple right $R$-module $S$ to $M$ can be lifted to $N$. We prove
that, a module is neat-flat if and only if it is simple-projective.
Neat-flat right $R$-modules are projective if and only if $R$ is a
right $\sum$-$CS$ ring. Every finitely generated neat-flat right
$R$-module is projective if and only if $R$ is a right $C$-ring and
every finitely generated free right $R$-module is extending. Every
cyclic neat-flat right $R$-module is projective if and only if $R$
is right $CS$ and right $C$-ring.  Some characterizations of
neat-flat modules are  obtained over the rings whose simple right
$R$-modules are finitely presented.
\end{abstract}

\subjclass[2000]{16D10, 16D40, 16D70, 16E30} \keywords{closed
submodule, neat submodule, extending modules, $m$-injective module,
$C$-ring, $CS$-ring}

\maketitle

\section{Introduction}

Throughout, $R$ is an associative ring with identity and all modules
are unitary right $R$-modules.
 For an R-module $M$, $M^{+}$, $E(M)$, $\Soc (M)$  will denote the character module, injective hull, the socle of $M$, respectively. A subgroup $A$ of an abelian group $B$ is called \emph{neat} in $B$ if $pA=A\cap pB$ for each prime integer $p$. The notion of neat subgroup generalized to modules by Renault (see, \cite{renault:neat}). Namely, a submodule $N$ of $R$-module $M$ is called \emph{neat} in $M$, if for every simple $R$-module $S$, every homomorphism $f:S \to M/N$ can be lifted to a homomorphism $g: S \to M$. Equivalently, $N$ is  neat in $M$ if and only if $\Hom (S, g):\, \Hom (S, M) \to \Hom (S, M/N)$ is an epimorphism for every simple $R$-module $S$.
Neat submodules have been studied extensively by many authors (see,
\cite{Wisbauer-et.al:t-complementedandt-supplementedmodules},
\cite{Fuchs:NeatSubmodulesOverIntegralDomain},
\cite{Mermut:Ph.D.tezi}, \cite{Stenstrom:highsubmodulesandpurity},
\cite{Stenstrom:Puresubmodules}). An $R$-module $M$ is called
\emph{m-injective} if for any maximal right ideal $\emph{I}$ of $R$,
any homomorphism $f : I \to M$ can be extended to a homomorphism $g
: R \to M$ (see, \cite{septimi:minjective}, \cite{Mermut:Ph.D.tezi},
\cite{Ozdemir:Ph.D.tezi},
\cite{smith:Injectivemodulesandprimeideals},
\cite{Wang:onmaximalinjective}, \cite{Xiang:maxflatmaxinjective}).
Note that, $m$-injective modules are called max-injective in
\cite{Wang:onmaximalinjective}. It turns out that, a module  $M$ is
$m$-injective if and only if $\Ext^{1}_{R}(R/\emph{I},M) = 0$ for
any maximal right ideal $\emph{I}$ of $R$ if and only if $M$ is a
neat submodule   in every module containing it i.e. any short exact
sequence of the form $0 \rightarrow M \rightarrow N\rightarrow L
\rightarrow 0 $ is neat-exact (see, \cite[Theorem
2]{septimi:minjective}). A ring $R$ is a right $C$-ring if for every
proper essential right ideal $I$ of $R$, the module $R/I$ has a
simple module, (see, \cite{Renault:Cring}). Any right semiartinian
ring is a $C$-ring, and a domain is a $C$-ring if and only if every
torsion $R$-module contains a simple module. By \cite[Lemma
4]{smith:Injectivemodulesandprimeideals}, $R$ is a right $C$-ring if
and only if every $m$-injective module is injective.

Motivated by the relation between $m$-injective modules and neat
submodules, we  investigate the modules $M$, for which any short
exact sequence ending with $M$ is neat-exact. Namely, we call $M$
\emph{neat-flat} if for any epimorphism $f:N \rightarrow M$, the
induced map $\Hom (S,\,N)\rightarrow \Hom (S, M)$ is surjective for
any simple right $R$-module $S$.

In \cite{mao:whendoeseverysimplehaveaprojectiveenvelope}, a right
$R$-module $M$ is called \emph{simple-projective} if for any simple
right $R$-module $N$, every homomorphism $f: N \rightarrow M$
factors through a finitely generated free right $R$-module $F$, that
is, there exist homomorphisms $g: N \rightarrow F$ and $h: F
\rightarrow M$ such that $f=hg.$ Simple-projective modules and a
generalization of these modules have been studied in
\cite{mao:whendoeseverysimplehaveaprojectiveenvelope} and
\cite{rada}, respectively. By using simple-projective modules, the
authors, characterize the rings whose simple (resp. finitely
generated) right modules have projective (pre)envelope in the sense
of \cite{xu:flatcoverofmodules}. Clearly,  projective modules and
modules with $\Soc (M)=0$  are  simple-projective. Also, a simple
right $R$-module is simple-projective if and only if it is
projective. Hence, $R$ is a semisimple Artinian ring if and only if
every right $R$-module is simple-projective (see, \cite[Remark
2.2.]{mao:whendoeseverysimplehaveaprojectiveenvelope}).

The paper is organized as follows.

In section 3, we prove that, a right $R$-module $M$ is neat-flat if
and only if $M$ is simple-projective (Theorem
\ref{thm.neatflat=simpleprojective}). The right socle of $R$ is zero
if and only if  neat-flat modules coincide with the modules that
have zero socle (Proposition \ref{Proposition: neat-flat=socle
zero}). We also investigate the rings over which neat-flat modules
are projective. Namely, we prove that, (1) every neat-flat module is
projective if and only if $R$ is a right $\sum$-$CS$ ring (Theorem
\ref{Hring}); (2) every finitely generated neat-flat module is
projective if and only if $R$ is a right $C$-ring and every finitely
generated free right $R$-module is extending (Theorem
\ref{finitelygeneratedneatflat}); (3) every cyclic right $R$-module
is projective if and only if $R$ is right $CS$ and right $C$-ring
(Corollary \ref{cor:cyclic neat-flat}).

In section 4, we consider  neat-flat modules over the rings whose
simple right  modules are finitely presented. In this case, the
Auslander-Bridger tranpose of any simple right $R$-module is a
finitely presented left $R$-module. This fact is used to obtain
several characterization of neat-flat modules. Also, we examine the
relation between the flat, absolutely pure and neat-flat modules
over such rings.

For the unexplained concepts and results we refer the reader to
\cite{AF}, \cite{extendingmodules},
\cite{Wisbauer:Foundationsofmoduleandringtheory} and
\cite{xu:flatcoverofmodules}.

\section{preliminaries}

The class of neat-exact sequences form a proper class in the sense
of \cite{Buchsbaum}. This fact leads to an important
characterization of neat-flat modules (see, Lemma
\ref{lemma:neat-flat}). This characterization become crucial in the
proof of the results in the present paper. In this section, we give
some definitions and results which are used in the sequel.

Let $R$ be an associative ring with identity and $\AProperClass$ be
a class of short exact sequences of right $R$-modules and $R$-module
homomorphisms. If a short exact sequence $\mathbb{E}:0\to
A\overset{f}{\to}B\overset{g}{\to} C\to 0$  belongs to
$\AProperClass$, then $f$ is said to be a
$\AProperClass$-\emph{monomorphism}
 and $g$ is said to be a $\AProperClass$-\emph{epimorphism}.
A short exact sequence $\mathbb{E}$ is determined by each of the
monomorphisms $f$ and the epimorphisms $g$ uniquely up to
isomorphism.

\begin{definition}\label{definition of proper classes} The class $\AProperClass$ is said to be \emph{proper} (in the sense of Buchsbaum) if it satisfies the
following conditions \cite{maclane:homology}:
\begin{enumerate}
    \item[P-1)] If a short exact sequence $E$ is in $\AProperClass$, then $\AProperClass$
    contains every short exact sequence isomorphic to $E$ .
    \item[P-2)] $\AProperClass$ contains all splitting short exact sequences.
    \item[P-3)] The composite of two $\AProperClass$-monomorphisms is a $\AProperClass$-monomorphism if this composite is defined.
    \item[P-4)] The composite of two $\AProperClass$-epimorphisms is a $\AProperClass$-epimorphism if this composite is defined.
    \item[P-5)] If $g$ and $f$ are monomorphisms, and $g\,\circ f$ is a $\AProperClass$-monomorphism, then $f$ is a $\AProperClass$-monomorphism.
    \item[P-6)] If $g$ and $f$ are epimorphisms, and $g\,\circ f$ is a $\AProperClass$-epimorphism.
    then $g$ is a $\AProperClass$-epimorphism.
\end{enumerate}
\end{definition}
\begin{sloppypar}From now on,  $\AProperClass$ will denote a proper class.
A module $M$ is called {\it $\AProperClass$-flat}
 if every short exact
sequence of the form $0\to A\to B\to M\to 0$ is in $\AProperClass.$
\end{sloppypar}

For a class $\mathcal{M}$ of right $R$-modules, let  $\tau
^{-1}(\mathcal{M})=\{\mathbb{E}\mid M \otimes \mathbb{E}\, \text{
exact for each} M \in  \mathcal{M} \}$, and $\pi
^{-1}(\mathcal{M})=\{\mathbb{E} \mid \Hom (M,\,\mathbb{E})\,\,
\text{is exact for each} M \in \mathcal{M} \}.$ Then the $\tau
^{-1}(\mathcal{M})$ and $\pi ^{-1}(\mathcal{M})$ are proper classes
(see, \cite{Sklyarenko:RelativeHomologicalAlgebra}). The classes
$\tau ^{-1}(\mathcal{M})$ and $\pi ^{-1}(\mathcal{M})$ are called
flatly generated and projectively generated by $\mathcal{M}$,
respectively.

\begin{theorem}\cite[Theorem 8.1]{Sklyarenko:RelativeHomologicalAlgebra}\label{conjugateofproperclasses} Let $\mathcal{M}$ be a class of modules and $\mathbb{E}$ be a short exact sequence. Then   $\mathbb{E} \in \tau ^{-1}(\mathcal{M})$ if and only if $\mathbb{E}^{+} \in \pi ^{-1}(\mathcal{M}).$
\end{theorem}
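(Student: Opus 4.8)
The plan is to deduce the equivalence from the tensor--hom duality with respect to the injective cogenerator $\Q/\Z$ of the category of abelian groups, which converts exactness of a tensored sequence into exactness of a dual $\Hom$-sequence one module at a time. Throughout, $M \in \mathcal{M}$ denotes a right $R$-module, $\mathbb{E}\colon 0 \to A \to B \to C \to 0$ a short exact sequence of left $R$-modules, and $(-)^{+} = \Hom_{\Z}(-,\Q/\Z)$, so that $\mathbb{E}^{+}$ is a short exact sequence of right $R$-modules and both $M \otimes_{R} \mathbb{E}$ and $\Hom_{R}(M, \mathbb{E}^{+})$ are complexes of abelian groups.

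The two ingredients I would assemble are standard. First, the adjunction isomorphism
\[
\Hom_{R}(M,\, N^{+}) \;\cong\; (M \otimes_{R} N)^{+},
\]
natural in the right $R$-module $M$ and the left $R$-module $N$; this is the $\Q/\Z$-instance of the adjunction between $M \otimes_{R} -$ and $\Hom_{\Z}(-,\Q/\Z)$. Second, the functor $(-)^{+}$ is exact and faithful, because $\Q/\Z$ is an injective cogenerator of $\AbelianGroups$; consequently a complex of abelian groups is exact if and only if its image under $(-)^{+}$ is exact.

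First I would fix $M \in \mathcal{M}$ and apply $(-)^{+}$ termwise to the (always right exact) complex $M \otimes_{R} \mathbb{E}$. Naturality of the displayed isomorphism in $M$ guarantees that the termwise isomorphisms $(M \otimes_{R} X)^{+} \cong \Hom_{R}(M, X^{+})$ commute with the differentials induced by $A \to B \to C$ and their duals $C^{+} \to B^{+} \to A^{+}$, yielding an isomorphism of complexes
\[
(M \otimes_{R} \mathbb{E})^{+} \;\cong\; \Hom_{R}(M,\, \mathbb{E}^{+}).
\]
Then I would run the chain of equivalences: by the faithful exactness of $(-)^{+}$, the complex $M \otimes_{R} \mathbb{E}$ is exact if and only if $(M \otimes_{R} \mathbb{E})^{+}$ is exact, and by the isomorphism just obtained this happens if and only if $\Hom_{R}(M, \mathbb{E}^{+})$ is exact. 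Since the equivalence holds for each individual $M$, quantifying over all $M \in \mathcal{M}$ gives $\mathbb{E} \in \tau^{-1}(\mathcal{M}) \iff \mathbb{E}^{+} \in \pi^{-1}(\mathcal{M})$.

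I do not expect a serious obstacle here; the only care needed is bookkeeping. One must verify that the adjunction is genuinely natural so that it intertwines the relevant maps, and keep track of the fact that $M \otimes_{R} \mathbb{E}$ is automatically right exact while $\Hom_{R}(M, \mathbb{E}^{+})$ is automatically left exact. Thus in each case ``exactness'' reduces to a single extra condition --- injectivity of $M \otimes_{R} A \to M \otimes_{R} B$ on the one side, surjectivity of $\Hom_{R}(M, B^{+}) \to \Hom_{R}(M, A^{+})$ on the other --- and the functorial isomorphism matches these two conditions automatically.
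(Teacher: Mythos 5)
Your proof is correct: the natural adjunction isomorphism $\Hom_{R}(M,N^{+})\cong (M\otimes_{R}N)^{+}$ together with the faithful exactness of $(-)^{+}$ (from $\Q/\Z$ being an injective cogenerator of $\AbelianGroups$) gives exactly the chain of equivalences needed, with the sidedness of $\mathbb{E}$ and $\mathbb{E}^{+}$ handled properly. Note that the paper itself offers no proof of this statement --- it is quoted verbatim from Sklyarenko's Theorem 8.1 --- so there is nothing internal to compare against; your argument is the standard duality proof of this classical fact and is the one the cited source relies on.
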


Let $M$ be a finitely presented right $R$-module. Then there is an
exact sequence $\gamma:P_{0}\overset{f}{\to}P_{1}\overset{g}{\to} M$
where $P_{0}$ and $P_{1}$ are finitely generated projective right
$R$-modules. By applying the functor $(-)^{*} = \Hom_{R}(-,R)$ to
this sequence, we get: $0\to \Hom_{R}(M,R)\overset{g^{*}}{\to}
Hom_{R}(P_{0},R)\overset{f^{*}}{\to}\Hom_{R}(P_{1},R).$ If the right
side of this sequence of left $R$-modules filled by the module
$Tr_{\gamma}(M):= Coker(f^{*}) = P_{1}^{*} / \Image(f^{*})$ then we
obtain the exact sequence $ \gamma^{*}:
P_{0}^{*}\overset{f^{*}}{\to}P_{1}^{*}\overset{\sigma}{\to}Tr_{\gamma}(M)\to
0$ where $\sigma$ is the canonical epimorphism. For a finitely
generated projective $R$-module $P$, its dual $P^{*} =
\Hom_{R}(P,R)$ is a finitely generated projective right $R$-module.
So $P_{0}^{*}$ and $P_{1}^{*}$ are finitely generated projective
modules, hence the exact sequence $ \gamma^{*}$ is a presentation
for the finitely presented right $R$-module $Tr_{\gamma}(M)$ which
is called the Auslander-Bridger tranpose of the finitely presented
$R$-module $M$, (see \cite{auslenderbridge:stablemoduletheory}).

\begin{proposition}\cite[Corollary 5.1]{Sklyarenko:RelativeHomologicalAlgebra}\label{transpoz1} For any finitely presented right $R$-module $M$ and any short exact sequence $\mathbb{E}$ of right $R$-modules, the sequence $\Hom(M,\mathbb{E})$ is exact if and only if
the sequence $\mathbb{E} \otimes Tr(M) $ is exact.
\end{proposition}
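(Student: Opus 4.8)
The plan is to reduce both sides of the asserted equivalence to the vanishing of a single connecting homomorphism, which I will produce via the Snake Lemma from the finite projective presentation of $M$. First I would observe that each side is really a statement about one map. Write $\mathbb{E}: 0 \to A \overset{a}{\to} B \overset{b}{\to} C \to 0$. Since $\Hom(M,-)$ is left exact, $\Hom(M,\mathbb{E})$ is exact if and only if $\Hom(M,b):\Hom(M,B)\to\Hom(M,C)$ is surjective; dually, since $-\otimes Tr(M)$ is right exact, $\mathbb{E}\otimes Tr(M)$ is exact if and only if $a\otimes 1: A\otimes Tr(M)\to B\otimes Tr(M)$ is injective. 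So it suffices to produce a mechanism forcing these two conditions to hold or fail together.

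Next I would write a finite projective presentation of $M$ in the form $P_1\overset{f}{\to}P_0\to M\to 0$ (so that the transpose is $Tr(M)=\operatorname{Coker} f^{*}$, exactly as constructed above), and invoke the natural isomorphism $\theta_{P,N}:N\otimes_R P^{*}\overset{\cong}{\to}\Hom_R(P,N)$, $n\otimes\phi\mapsto(p\mapsto n\phi(p))$, valid for every finitely generated projective right $R$-module $P$ and natural in both $P$ and $N$. Applying $\Hom(-,N)$ to the presentation identifies $\Hom(M,N)=\Ker\alpha_N$, where $\alpha_N:=\Hom(f,N):\Hom(P_0,N)\to\Hom(P_1,N)$; applying $N\otimes-$ to the dual presentation $P_0^{*}\overset{f^{*}}{\to}P_1^{*}\to Tr(M)\to 0$ and transporting along $\theta$ identifies $N\otimes Tr(M)=\operatorname{Coker}\alpha_N$. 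The crucial point is that the \emph{single} map $\alpha_N$ computes $\Hom(M,N)$ as its kernel and $N\otimes Tr(M)$ as its cokernel, and both identifications are natural in $N$.

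Now, since $P_0$ and $P_1$ are projective, the functors $\Hom(P_0,-)$ and $\Hom(P_1,-)$ are exact, so applying them to $\mathbb{E}$ yields a commutative diagram with exact rows,
\[
\begin{CD}
0 @>>> \Hom(P_0,A) @>>> \Hom(P_0,B) @>>> \Hom(P_0,C) @>>> 0 \\
@. @V{\alpha_A}VV @V{\alpha_B}VV @V{\alpha_C}VV @. \\
0 @>>> \Hom(P_1,A) @>>> \Hom(P_1,B) @>>> \Hom(P_1,C) @>>> 0
\end{CD}
\]
to which the Snake Lemma applies. Reading off kernels and cokernels through the identifications of the previous paragraph produces the six-term exact sequence
\[
0 \to \Hom(M,A) \to \Hom(M,B) \overset{\Hom(M,b)}{\to} \Hom(M,C) \overset{\delta}{\to} A\otimes Tr(M) \overset{a\otimes 1}{\to} B\otimes Tr(M) \to C\otimes Tr(M) \to 0.
\]
By exactness at $\Hom(M,C)$, the map $\Hom(M,b)$ is surjective if and only if $\delta=0$; by exactness at $A\otimes Tr(M)$, the map $a\otimes 1$ is injective if and only if $\Image\delta=0$, i.e.\ again $\delta=0$. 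Hence both conditions are equivalent to the single condition $\delta=0$, and therefore to each other, which is the assertion.

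I expect the main obstacle to be bookkeeping rather than conceptual. One must fix the left/right module structures so that $\theta_{P,N}$ is well defined and genuinely natural in $N$ (this naturality is what guarantees that the cokernel row of the snake computation is the tensored sequence $\mathbb{E}\otimes Tr(M)$ with its correct maps), and one must verify that the horizontal maps in the diagram are precisely those induced by $a$ and $b$, so that the connecting map $\delta$ is the common obstruction to both sides. Granting these identifications, the exactness of the rows uses only projectivity of $P_0$ and $P_1$, and the Snake Lemma does the rest, so the remainder of the argument is routine.
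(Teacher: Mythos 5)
Your argument is correct and complete. One thing to note at the outset: the paper itself offers no proof of this proposition --- it is imported verbatim as \cite[Corollary 5.1]{Sklyarenko:RelativeHomologicalAlgebra} --- so there is no internal argument to compare against; what you have supplied is a self-contained substitute for the citation, and it is in fact the standard (Auslander--Bridger style) proof of the cited result. Your key observation is exactly the right one: after fixing a finite presentation $P_1\overset{f}{\to}P_0\to M\to 0$, the single map $\alpha_N=\Hom(f,N)$ has $\Ker\alpha_N\cong\Hom(M,N)$ and, via the natural isomorphism $N\otimes_R P^{*}\cong\Hom_R(P,N)$ for finitely generated projective $P$, $\operatorname{Coker}\alpha_N\cong N\otimes Tr(M)$; applying $\Hom(P_i,-)$ to $\mathbb{E}$ and running the Snake Lemma then yields the six-term exact sequence $0\to\Hom(M,A)\to\Hom(M,B)\to\Hom(M,C)\overset{\delta}{\to}A\otimes Tr(M)\to B\otimes Tr(M)\to C\otimes Tr(M)\to 0$, from which both halves of the equivalence fall out since each is equivalent to $\delta=0$. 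This actually proves slightly more than the statement (the two failures are measured by the same obstruction $\delta$), and your flagged bookkeeping points --- $Tr(M)$ being a left module so that $\mathbb{E}\otimes Tr(M)$ makes sense, naturality of $\theta$ in $N$ so that the cokernel row really is $\mathbb{E}\otimes Tr(M)$ with the maps $a\otimes 1$ and $b\otimes 1$ --- are precisely the only places where care is needed, and you have handled them correctly.
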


\begin{theorem}\cite[Theorem 8.3]{Sklyarenko:RelativeHomologicalAlgebra}\label{transpoz2}
 Let $\mathcal{M}$ be a set of finitely presented left $R$-modules.
Let $Tr(\mathcal{M}) = \left\{Tr(M)| M\in \mathcal{M}\right\}$. Then
we have $\pi^{-1}(\mathcal{M}) = \tau^{-1}(Tr(\mathcal{M}))$ and
$\tau^{-1}(\mathcal{M}) = \pi^{-1}(Tr(\mathcal{M})).$ \end{theorem}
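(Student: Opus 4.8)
The plan is to read both identities as direct consequences of the transpose--duality recorded in Proposition \ref{transpoz1}, once that statement is used symmetrically on the two sides and combined with the fact that $Tr$ is an involution modulo projective summands. Note first that everything is of the right type: since $\mathcal{M}$ consists of finitely presented left $R$-modules, each $Tr(M)$ is a finitely presented right $R$-module, so $\pi^{-1}(\mathcal{M})$ and $\tau^{-1}(Tr(\mathcal{M}))$ are both classes of short exact sequences of left modules, while $\tau^{-1}(\mathcal{M})$ and $\pi^{-1}(Tr(\mathcal{M}))$ are both classes of sequences of right modules. For the first identity I would fix $M\in\mathcal{M}$ and invoke the left-module analogue of Proposition \ref{transpoz1} (legitimate because the construction of the transpose is symmetric in the two sides): for a short exact sequence $\mathbb{E}$ of left $R$-modules, $\Hom(M,\mathbb{E})$ is exact if and only if $Tr(M)\otimes\mathbb{E}$ is exact. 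Ranging over all $M$ then yields, for a fixed $\mathbb{E}$,
\begin{align*}
\mathbb{E}\in\pi^{-1}(\mathcal{M}) &\iff \Hom(M,\mathbb{E})\text{ is exact for every }M\in\mathcal{M}\\
&\iff Tr(M)\otimes\mathbb{E}\text{ is exact for every }M\in\mathcal{M}\\
&\iff \mathbb{E}\in\tau^{-1}(Tr(\mathcal{M})),
\end{align*}
which is precisely $\pi^{-1}(\mathcal{M})=\tau^{-1}(Tr(\mathcal{M}))$.

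For the second identity I would apply the first one on the opposite side. Since $Tr(\mathcal{M})$ is a set of finitely presented \emph{right} $R$-modules, the right-handed form of the identity just established gives $\pi^{-1}(Tr(\mathcal{M}))=\tau^{-1}(Tr(Tr(\mathcal{M})))$. It then remains to replace $Tr(Tr(\mathcal{M}))$ by $\mathcal{M}$. Here one uses that for each finitely presented $M$ the modules $Tr(Tr(M))$ and $M$ coincide up to projective direct summands, the transpose being an involution on finitely presented modules modulo projectives. Because tensoring a short exact sequence with a projective module always produces an exact sequence, and a finite direct sum of sequences is exact exactly when each summand is, the sequences $\mathbb{E}\otimes Tr(Tr(M))$ and $\mathbb{E}\otimes M$ are exact simultaneously; hence $\tau^{-1}(Tr(Tr(\mathcal{M})))=\tau^{-1}(\mathcal{M})$ and therefore $\tau^{-1}(\mathcal{M})=\pi^{-1}(Tr(\mathcal{M}))$.

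The routine parts are the left/right symmetry of Proposition \ref{transpoz1} and the bookkeeping of which side each sequence lives on. The one genuinely structural input, and the step I expect to be the crux, is the involutivity $Tr\circ Tr\cong\mathrm{id}$ modulo projective summands, together with the observation that both $\tau^{-1}$ and $\pi^{-1}$ are insensitive to adjoining or deleting projective direct summands from their test modules (for $\pi^{-1}$ because $\Hom(-,\mathbb{E})$ is additive and exact on projectives, for $\tau^{-1}$ because $-\otimes\mathbb{E}$ is additive and exact on projectives). Granting that, neither equality needs any computation beyond Proposition \ref{transpoz1}. It is worth emphasizing that this transpose--duality is independent of the character--module duality of Theorem \ref{conjugateofproperclasses}, which pairs $\tau^{-1}$ and $\pi^{-1}$ for the \emph{same} class $\mathcal{M}$ via $\mathbb{E}\mapsto\mathbb{E}^{+}$ rather than via $Tr$.
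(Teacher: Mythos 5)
The paper does not actually prove this statement: it is imported verbatim from Sklyarenko (Theorem 8.3 of the cited survey), so there is no in-paper argument to compare yours against. That said, your reconstruction is correct and assembles the result from ingredients the paper does record. The first identity is exactly the left-module form of Proposition \ref{transpoz1} read uniformly over $M\in\mathcal{M}$, and invoking that form is legitimate (apply the right-module statement over the opposite ring). The second identity genuinely requires the two extra facts you isolate: that $Tr(Tr(M))$ and $M$ agree up to finitely generated projective direct summands, and that exactness of $\Hom(-,\mathbb{E})$ and of $\mathbb{E}\otimes-$ is unaffected by adjoining projective summands to the test module (since $\Hom(P,\mathbb{E})$ and $\mathbb{E}\otimes P$ are exact for projective $P$, and a direct sum of complexes is exact if and only if each summand is). Both are standard Auslander--Bridger facts and your justifications are sound; note, moreover, that the insensitivity to projective summands is needed even for the classes $\tau^{-1}(Tr(\mathcal{M}))$ and $\pi^{-1}(Tr(\mathcal{M}))$ to be well defined, because $Tr$ itself depends on the chosen presentation only up to such summands. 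A small streamlining is available: with the canonical presentation $\gamma^{*}$ of $Tr_{\gamma}(M)$ described in the paper's preliminaries, applying $(-)^{*}$ twice and using reflexivity of finitely generated projectives gives $Tr_{\gamma^{*}}(Tr_{\gamma}(M))\cong M$ on the nose, which removes most of the ``modulo projectives'' bookkeeping. Your closing remark that this transpose duality is independent of the character-module duality of Theorem \ref{conjugateofproperclasses} is also accurate.
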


\section{Neat-flat modules}

By definition, the class of neat-exact sequences is projectively
generated by the class of simple right $R$-modules. Hence neat-exact
sequences form a proper class.  For the following lemma we refer to
\cite[Proposition 1.12-1.13]{mishina:abeliangroupsandmodules}. Its
proof is included for completeness.

\begin{lemma}\label{lemma:neat-flat}The following are equivalent for a right $R$-module $M$.
\begin{enumerate}

\item[(1)] $M$ is neat-flat.
 \item [(2)]Every exact sequence $0\to A\to B\to M\to 0$ is neat exact.
 \item [(3)]There exists a neat exact sequence $0\to K\to F\to M\to 0$ with $F$ projective.
 \item [(4)]There exists a neat exact sequence $0\to K\to F\to M\to 0$ with $F$ neat-flat.
\end{enumerate}
\end{lemma}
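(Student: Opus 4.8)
The plan is to prove the statements in the cycle $(1)\Leftrightarrow(2)$ and $(2)\Rightarrow(3)\Rightarrow(4)\Rightarrow(2)$. The equivalence $(1)\Leftrightarrow(2)$ is essentially a reformulation of the definition: an epimorphism $f\colon N\to M$ carries the same information as a short exact sequence $0\to\Ker f\to N\to M\to 0$, and the surjectivity of $\Hom(S,N)\to\Hom(S,M)$ for every simple right $R$-module $S$ is precisely the statement that this sequence is neat-exact. Thus $M$ is neat-flat if and only if every short exact sequence ending in $M$ is neat-exact, which is exactly $(2)$.

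For $(2)\Rightarrow(3)$ I would pick any epimorphism $\pi\colon F\to M$ with $F$ free, hence projective, yielding a short exact sequence $0\to K\to F\to M\to 0$; by $(2)$ this particular sequence is neat-exact, which is $(3)$. For $(3)\Rightarrow(4)$ it is enough to observe that every projective module is neat-flat: if $F$ is projective then any short exact sequence $0\to A\to B\to F\to 0$ splits, and splitting sequences belong to the proper class of neat-exact sequences by axiom (P-2). Hence the module $F$ supplied by $(3)$ is neat-flat, giving $(4)$.

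The substance of the lemma is $(4)\Rightarrow(2)$, and this is where the proper-class axioms are used. Given a neat-exact sequence $0\to K\to F\overset{\pi}{\to} M\to 0$ with $F$ neat-flat, and an arbitrary epimorphism $g\colon B\to M$, I would form the pullback $P$ of $\pi$ and $g$, with projections $p_F\colon P\to F$ and $p_B\colon P\to B$ satisfying $\pi p_F=g p_B$. Since $\pi$ and $g$ are epimorphisms, the pullback projections $p_B$ and $p_F$ are again epimorphisms. Because $F$ is neat-flat, the already-established equivalence $(1)\Leftrightarrow(2)$ applied to $F$ shows that $p_F\colon P\to F$ is a neat-epimorphism, and $\pi$ is a neat-epimorphism by hypothesis; by axiom (P-4) the composite $\pi p_F\colon P\to M$ is a neat-epimorphism. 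Since $\pi p_F=g p_B$ and both $g$ and $p_B$ are epimorphisms, axiom (P-6) forces $g$ to be a neat-epimorphism. Therefore the sequence $0\to\Ker g\to B\overset{g}{\to} M\to 0$ is neat-exact, which establishes $(2)$.

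The main obstacle is bookkeeping: one must check that the pullback projections genuinely inherit the epimorphism property, identify the neat-flatness of $F$ with the neat-epimorphism property of $p_F$, and select the correct axioms (P-4 for the composite and P-6 for the descent to $g$). Once the pullback square is set up, the proper-class structure of the neat-exact sequences does the remaining work mechanically.
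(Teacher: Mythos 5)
Your proof is correct and takes essentially the same approach as the paper: the substantive implication is handled by the identical pullback construction, using the neat-flatness of $F$ to see that the projection onto $F$ is a neat epimorphism, then axiom (P-4) for the composite and (P-6) to descend to $g$. The only cosmetic differences are that you close the cycle at $(2)$ instead of $(1)$ and that you spell out the implications the paper dismisses as clear (including the observation, via (P-2), that projective modules are neat-flat).
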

\begin{proof}
$(1)\Rightarrow (2) \Rightarrow (3) \Rightarrow (4)$ are clear.

$(4) \Rightarrow (1)$ Let $0\to A\to B\overset{g}{\to} M\to 0$ be
any short exact sequence. We claim that $g$ is a neat epimorphism.
By (4), there exists a neat exact sequence $0\to K\to
F\overset{s}{\to} M\to 0$ with $F$ neat-flat. We obtain a
commutative diagram with exact rows
$$
\begin{xy}
  \xymatrix{
 & & & 0 \ar[r] & A\ar@{=}[d] \ar[r]& B' \ar[d]^{u}\ar[r]^{t} & F\ar[r]\ar[d]^{s} & 0 \\
& & & 0 \ar[r] & A \ar[r] & B\ar[r]^{g} & M\ar[r] & 0 & &
              }
\end{xy}
$$
 in which the right
square is a pullback diagram. Since $F$ is neat-flat, $t$ is a neat
epimorphism. Then  $gu=st$ is a neat epimorphism by \ref{definition
of proper classes} P-4), and so $f$ is a neat epimorphism by
\ref{definition of proper classes} P-6). This completes the proof.
\end{proof}

\begin{theorem}\label{thm.neatflat=simpleprojective}  Let $R$ be a ring and $M$ be an $R$-module. Then $M$ is simple-projective if and only if $M$ is neat-flat.
\end{theorem}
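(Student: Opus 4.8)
The plan is to pass through the characterization of neat-flatness furnished by Lemma \ref{lemma:neat-flat}, which says that $M$ is neat-flat precisely when some (equivalently, every) short exact sequence $0 \to K \to F \xrightarrow{\pi} M \to 0$ with $F$ projective is neat-exact. Choosing $F$ to be free, neat-exactness of this sequence means exactly that $K$ is a neat submodule of $F$, i.e. that for every simple right $R$-module $S$ the map $\Hom(S,F) \to \Hom(S,M)$ induced by $\pi$ is surjective; concretely, every $f : S \to M$ lifts to some $\tilde f : S \to F$ with $\pi \tilde f = f$. I would phrase both properties in the theorem as factoring/lifting statements about homomorphisms out of simple modules and then match them up.

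For the implication simple-projective $\Rightarrow$ neat-flat, fix a free presentation $0 \to K \to F \xrightarrow{\pi} M \to 0$, a simple module $S$, and a homomorphism $f : S \to M$. By hypothesis $f = hg$ factors through a finitely generated free module $F'$, say $g : S \to F'$ and $h : F' \to M$. Since $F'$ is projective and $\pi$ is an epimorphism, $h$ lifts to $\bar h : F' \to F$ with $\pi \bar h = h$; then $\bar h g : S \to F$ satisfies $\pi(\bar h g) = f$. Thus every map from a simple module lifts through $\pi$, so $K$ is neat in $F$ and $M$ is neat-flat by Lemma \ref{lemma:neat-flat}.

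For the converse neat-flat $\Rightarrow$ simple-projective, again fix a free presentation $0 \to K \to F \xrightarrow{\pi} M \to 0$; by Lemma \ref{lemma:neat-flat} this sequence is neat-exact. Given a simple module $S$ and $f : S \to M$, neatness of $K$ in $F$ produces a lift $\tilde f : S \to F$ with $\pi \tilde f = f$. The point now is to replace $F$ by a finitely generated free summand: since $S$ is simple it is cyclic, generated by a single element $s$, so $\tilde f(S) = \tilde f(s) R$, and the element $\tilde f(s)$ has only finitely many nonzero coordinates with respect to a basis of $F$. Hence $\tilde f(S)$ is contained in the finitely generated free direct summand $F'$ of $F$ spanned by the corresponding basis elements. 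Taking $g : S \to F'$ to be $\tilde f$ corestricted to $F'$ and $h : F' \to M$ to be $\pi$ restricted to $F'$ gives $hg = \pi \tilde f = f$, so $f$ factors through the finitely generated free module $F'$ and $M$ is simple-projective.

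The two directions are essentially symmetric liftings, and the only genuine subtlety is the \emph{finitely generated} clause in the definition of simple-projective: in the converse one must not merely factor through the possibly infinitely generated free module $F$, but through a finitely generated free module. This is exactly where the cyclicity of simple modules is used, since the image of a cyclic module under $\tilde f$ is a cyclic, hence finitely generated, submodule of $F$ landing in a finitely generated free summand. No finiteness or other special hypotheses on $R$ or $M$ are required.
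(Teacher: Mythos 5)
Your proof is correct and follows essentially the same route as the paper: the forward direction lifts the finitely generated free factorization through the free presentation using projectivity, and the converse uses the neat lifting together with the fact that the image of a simple (hence finitely generated) module lands in a finitely generated free direct summand of $F$. Your remark that cyclicity of $S$ is the key to obtaining the \emph{finitely generated} clause matches the paper's use of finite generation of $S$ exactly.
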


\begin{proof} Suppose $M$ is simple-projective and $s :R^{(I)} \rightarrow M$ be an epimorphism. Let $S$ be simple right $R$-module and $f:S \rightarrow M$ be a homomorphism. As $M$ is simple-projective $f$ factors through a finitely generated free module i.e. there are homomorphisms $h: S \rightarrow R^n$ and $g: R^n \rightarrow M$ such that $f=gh.$ Since $R^n$ is projective, there is a homomorphism $t:R^n \rightarrow R^{(I)}$ such that $g=st.$ We get the following diagram $$
\begin{xy}
  \xymatrix{
  & &R^n\ar[rd]^{g}\ar[d]^{t} &S\ar[d]^{f}\ar[l]^{h}&& & & \\
 & & R^{(I)}\ar[r]^{s}&M & & && \\
              }
\end{xy}
$$Then $f=gh=sth$, and so the induced map $\Hom(S,\,R^{(I)}) \rightarrow \Hom(S,\, M) \rightarrow 0$ is surjective.
 Therefore the sequence $0\to \Ker s\to R^{(I)}\overset{s}{\to} M\to 0$  is neat exact.
 Hence $M$ is neat-flat by  Lemma \ref{lemma:neat-flat}(3).

Conversely, let $M$ be a neat-flat module. Then there is a neat
exact sequence $0\to K\to F\overset{g}{\to} M\to 0$  with $F$ free
by Lemma \ref{lemma:neat-flat}. Let $S$ be a simple module and $f: S
\rightarrow M$ be any homomorphism. Then there is a homomorphism $h:
S \rightarrow F$ such that $f=gh.$ As $S$ is finitely generated,
$h(S) \subseteq H$ for some finitely generated free submodule of
$F$. Then we get $f=gh=(gi)h'$ where $i:H \rightarrow F$ is the
inclusion and $h':S \rightarrow H$ is the homomorphism defined as
$h'(x)=h(x)$ for each $x \in S.$ Therefore $f$ factors through $H$,
and so $M$ is simple projective.
\end{proof}

Let $M$ be a right $R$-module with $\Soc(M)=0.$ Then $\Hom(S,\,M)=0$
for any simple right $R$-module $S$, and so $M$ is neat-flat.

\begin{proposition}\label{Proposition: neat-flat=socle zero}Let $R$ be a ring and $M$ be any $R$-module. The following are equivalent:
\begin{enumerate}
\item [(1)]$\Soc(R_{R})=0$.
\item [(2)]$M$ is neat-flat right $R$-module if and only if $\Soc(M)=0$.
   \end{enumerate}
\end{proposition}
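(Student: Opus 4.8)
The plan is to notice first that the biconditional in statement (2) already carries one implication for free: the paragraph immediately preceding the proposition shows that $\Soc(M)=0$ forces $M$ to be neat-flat for \emph{every} module $M$, with no hypothesis on $R$ at all. Consequently statement (2) is equivalent to the single assertion ``every neat-flat right $R$-module has zero socle,'' and the whole proposition reduces to showing that this assertion holds precisely when $\Soc(R_R)=0$. I would make this reduction explicit at the outset so that in each direction only the genuinely new content remains to be checked.

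For the implication $(2)\Rightarrow(1)$ I would simply test the hypothesis on the module $M=R_R$. Being free, $R_R$ is projective, hence simple-projective, hence neat-flat by Theorem~\ref{thm.neatflat=simpleprojective}. Applying the equivalence asserted in (2) to this particular neat-flat module yields $\Soc(R_R)=0$ immediately, which is exactly (1). This direction requires no work beyond recalling that free modules are neat-flat.

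For $(1)\Rightarrow(2)$ I only need the nontrivial half, namely that under $\Soc(R_R)=0$ a neat-flat module $M$ must satisfy $\Soc(M)=0$. I would argue by contradiction. If $\Soc(M)\neq 0$, choose a simple submodule $S\subseteq M$ and let $\iota:S\to M$ be the inclusion. Since $M$ is neat-flat it is simple-projective by Theorem~\ref{thm.neatflat=simpleprojective}, so $\iota$ factors as $\iota=hg$ through a finitely generated free module $R^n$, with $g:S\to R^n$ and $h:R^n\to M$. Because $\iota$ is nonzero, $g$ cannot be zero; and as $\Ker g$ is a submodule of the simple module $S$ it must be $0$, so $g$ embeds $S$ into $R^n$. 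Hence $\Soc(R^n)\neq 0$, contradicting $\Soc(R^n)=0$, where I use that the socle commutes with finite direct sums so that $\Soc(R^n)$ is a finite sum of copies of $\Soc(R_R)=0$.

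The argument is short, and I do not expect a serious obstacle. The one point that needs care is the factorization step: I must verify that the factoring map $g$ is actually \emph{injective}, so that $S$ genuinely embeds into a finitely generated free module, rather than merely nonzero. The only external facts invoked beyond Theorem~\ref{thm.neatflat=simpleprojective} and the definition of simple-projective are that free (hence projective) modules are neat-flat and that the socle of a finite direct sum is the direct sum of the socles.
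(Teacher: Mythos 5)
Your proof is correct, and it differs from the paper's in the key result it invokes for the substantive direction. Your reduction of (2) to the single assertion ``every neat-flat module has zero socle,'' and your proof of $(2)\Rightarrow(1)$ by testing on $R_R$, coincide with the paper. For $(1)\Rightarrow(2)$, however, the paper does not pass through simple-projectivity at all: it applies Lemma~\ref{lemma:neat-flat} to get a neat exact sequence $0\to K\to P\to M\to 0$ with $P$ projective, so that $\Hom_R(S,P)\to\Hom_R(S,M)\to 0$ is exact for every simple $S$; since $\Soc(P)=0$ (as $P$ is a summand of a free module and $\Soc(R_R)=0$), it follows that $\Hom_R(S,P)=0$, hence $\Hom_R(S,M)=0$ for all simple $S$, i.e.\ $\Soc(M)=0$. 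You instead route the argument through Theorem~\ref{thm.neatflat=simpleprojective}, factoring the inclusion of a hypothetical simple submodule through $R^n$ and deriving a contradiction. Both arguments turn on the same underlying fact --- maps from simple modules into $M$ must pass through a module whose socle is annihilated by hypothesis (1) --- but the paper's version is direct, kills all of $\Hom(S,M)$ at once, and needs no injectivity check, whereas yours trades the neat-exactness formalism for the simple-projective characterization (itself proved in the paper from Lemma~\ref{lemma:neat-flat}) at the cost of the extra verification that the factoring map $g$ is monic; your handling of that point ($g\neq 0$ and $\Ker g$ a proper submodule of the simple $S$, so $\Ker g=0$) is exactly right.
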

\begin{proof}$(1)\Rightarrow (2)$ Suppose $M$ is a neat-flat right  $R$-module.
Then there is a neat exact sequence $0\to K\to P\to\ M\to 0$ with
$P$ projective by
 Lemma \ref{lemma:neat-flat}. Then the sequence  $\Hom_{R}(S,P)\to\ \Hom_{R}(S,M)\to 0$ is exact
 for any simple right $R$-module $S$. We have $\Soc(P)=0$ by (1). Then $ \Hom_{R}(S,P)=0$, and so $\Soc(M)=0$.
The converse is clear.

$(2)\Rightarrow (1)$ Since every projective module is neat-flat,
$\Soc(R_{R})=0$ by (2).
\end{proof}

\begin{proposition}\label{propertiesofneatflats}\cite[Proposition 2.4]{mao:whendoeseverysimplehaveaprojectiveenvelope}The class of simple-projective right $R$-modules is closed under
extensions, direct sums, pure submodules, and direct summands.
\end{proposition}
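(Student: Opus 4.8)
The plan is to prove that the class of simple-projective (equivalently, neat-flat) right $R$-modules is closed under extensions, direct sums, pure submodules, and direct summands. My strategy is to work with the neat-flat characterization from Lemma \ref{lemma:neat-flat} wherever the homological bookkeeping is cleanest, and to fall back on the factorization definition of simple-projectivity when a direct diagram chase is more transparent. Since both descriptions are available by Theorem \ref{thm.neatflat=simpleprojective}, I am free to switch between them.

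\textbf{Direct summands and direct sums.} For a direct sum $M=\bigoplus_{i}M_{i}$, I would use the factorization definition. Any homomorphism $f\colon S\to M$ from a simple module $S$ has finitely generated image, and since $S$ is simple, $\Image(f)$ is either $0$ or simple, so $f(S)$ meets only finitely many of the summands nontrivially; in fact $\Image(f)\subseteq \bigoplus_{i\in J}M_{i}$ for a finite $J$. Composing with the projections, each component $f_{i}\colon S\to M_{i}$ factors through a finitely generated free module $F_{i}$ by simple-projectivity of $M_{i}$; assembling the finitely many factorizations gives a factorization of $f$ through $\bigoplus_{i\in J}F_{i}$, which is finitely generated free. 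Direct summands are then immediate: a summand $M$ of a simple-projective $N$ inherits factorizations by pre- and post-composing with the split inclusion and projection, so $M$ is simple-projective.

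\textbf{Extensions.} Here the neat-flat characterization via proper classes is the natural tool. Given a short exact sequence $0\to A\to B\to C\to 0$ with $A$ and $C$ neat-flat, pick an epimorphism $g\colon N\to B$ and consider the induced map on $\Hom(S,-)$ for simple $S$. The idea is to use the snake-style diagram relating the three sequences obtained by pulling back the presentation of $B$ along $A$, $B$, and $C$, together with the long exact sequence in $\Ext$: since $A$ and $C$ are neat-flat, $\Ext^{1}$ of simple modules into them vanishes in the relevant positions, and the long exact sequence forces the corresponding $\Ext^{1}$ for $B$ to vanish, which is exactly neat-flatness of $B$. Equivalently one argues that neat-exactness is preserved because the proper-class axioms P-4 and P-6 let one compose neat epimorphisms; the $\Ext$ formulation is the quickest route.

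\textbf{Pure submodules.} This is the step I expect to be the main obstacle, since purity is a flatness-type condition and must be reconciled with the neat (maximal-ideal) condition. Let $A$ be a pure submodule of a neat-flat module $M$. For a simple module $S$ and a homomorphism $f\colon S\to A$, I would compose with the inclusion to get $S\to M$, factor it through a finitely generated free $F$ via simple-projectivity of $M$, and then exploit purity: because $A\subseteq M$ is pure and the maps in play involve the finitely presented module $S$ (or the finitely generated free $F$), purity guarantees that the relevant homomorphism can be corrected to land back inside $A$. The delicate point is arranging the factorization so that the correction supplied by purity produces a genuine factorization of $f$ through a finitely generated free module mapping into $A$, rather than merely into $M$; making this compatible with the simplicity of $S$ and the finite generation of $F$ is where the argument requires care.
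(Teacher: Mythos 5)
Your arguments for direct sums and direct summands are correct and complete: the image of a simple module is cyclic, hence sits inside a finite sub-sum, and assembling the finitely many factorizations through $\bigoplus_{i\in J}F_i$ works exactly as you say. (For the record, the paper itself offers no proof to compare against: the proposition is quoted verbatim from Mao's paper, so your attempt has to stand on its own.) The other two closure properties, however, each have a genuine gap.

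\emph{Extensions.} Your main route rests on a false characterization: neat-flatness of $B$ is \emph{not} the vanishing of $\Ext^{1}(S,B)$ for simple $S$. That vanishing is an injectivity-type condition (it is essentially $m$-injectivity), and it fails even for projective modules, which are always neat-flat --- e.g. $\Ext^{1}_{\Z}(\Z/p\Z,\Z)\neq 0$. So ``the long exact sequence forces $\Ext^{1}$ for $B$ to vanish'' cannot be made to mean neat-flatness of $B$. Your fallback via P-4/P-6 also misfires: given $0\to K\to N\overset{g}{\to}B\to 0$ and $0\to A\overset{i}{\to}B\overset{p}{\to}C\to 0$, knowing that $pg$ is a neat epimorphism, axiom P-6 only yields that $p$ is neat, not the desired $g$. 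The correct proper-class argument runs through the \emph{monomorphism} axiom P-3 of Definition \ref{definition of proper classes}: set $N'=g^{-1}(i(A))$; then $K\hookrightarrow N'$ is a neat monomorphism because $N'/K\cong A$ is neat-flat, and $N'\hookrightarrow N$ is a neat monomorphism because $N/N'\cong C$ is neat-flat, so the composite $K\hookrightarrow N$ is neat by P-3, i.e. the sequence $0\to K\to N\to B\to 0$ is neat-exact and $B$ is neat-flat by Lemma \ref{lemma:neat-flat}.

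\emph{Pure submodules.} You explicitly stop short of the key step, and the difficulty you flag is real but resolvable by using the solvability-of-linear-equations form of purity rather than any finitely-presented-test-module form (note that $S$ need not be finitely presented over a general ring, so the latter is unavailable; only cyclicity of $S$ is needed). Write $S\cong R/I$ with $I$ a maximal right ideal and $f(1+I)=a\in A$. Simple-projectivity of $M$ gives $g\colon S\to R^{n}$ and $h\colon R^{n}\to M$ with $hg=\iota f$; writing $g(1+I)=(x_{1},\dots,x_{n})$ and $m_{j}=h(e_{j})$, this says precisely $a=\sum_{j}m_{j}x_{j}$, with each $x_{j}$ annihilating $I$ on the right. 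Now purity of $A$ in $M$ applied to the single linear equation $\sum_{j}z_{j}x_{j}=a$ (coefficients $x_{j}\in R$, constant term $a\in A$, solvable in $M$ by $z_{j}=m_{j}$) produces a solution $a_{j}\in A$. Defining $h'\colon R^{n}\to A$ by $h'(e_{j})=a_{j}$ gives $h'g=f$, so $f$ factors through $R^{n}$ with target map landing in $A$, and $A$ is simple-projective. With these two repairs your outline becomes a complete proof.
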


Recall that, a submodule $N$ of a module $M$ is called \emph{closed
(or a complement) in $M$}, if $N$ has no proper essential extension
in $M$, i.e. $N \unlhd K \leq M$ implies $N=K.$ A module $M$ is said
to be an extending module or a $CS$-module if every closed
submodule of $M$ is a direct summand of $M$. $R$ is  a right $CS$
ring if $R_{R}$ is $CS$.  $M$ is called  (countably) $\sum$-$CS$
module if every direct sum of (countably many) copies of $M$ is
$CS$, (see, for example, \cite{extendingmodules}). The $\sum$-$CS$
rings were first introduced and termed as co-$H$-rings in
\cite{oshiro}. Closed submodules are neat by \cite[Proposition
5]{Stenstrom:highsubmodulesandpurity}. By \cite[Theorem
5]{generalov}, every neat submodule is closed if and only if $R$ is
a right $C$-ring.
\begin{theorem}\label{Hring}Let $R$ be a ring. The following are equivalent.
\begin{enumerate}
\item [(1)]Every neat-flat right $R$-module is projective.
\item [(2)]$R$ is a right $\sum$-$CS$ ring.
\end{enumerate}
\end{theorem}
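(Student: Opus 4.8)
The plan is to prove the two implications separately, using the key characterization that neat-flat equals simple-projective (Theorem~\ref{thm.neatflat=simpleprojective}) together with Lemma~\ref{lemma:neat-flat}, and to connect projectivity of neat-flat modules to the extending property of free modules via the fact that closed submodules are neat.

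\medskip

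\noindent\emph{The implication $(2)\Rightarrow(1)$.} Assume $R$ is right $\sum$-$CS$. Let $M$ be a neat-flat right $R$-module. By Lemma~\ref{lemma:neat-flat}(3) there is a neat exact sequence $\ShortExactSequence{K}{}{F}{g}{M}$ with $F$ free, say $F=R^{(I)}$. Since $R$ is right $\sum$-$CS$, the free module $F$ is extending, so the closed submodules of $F$ are exactly its direct summands. The strategy is to show that $K$ is a closed submodule of $F$; once this is established, $K$ is a direct summand of $F$ and hence $M\cong F/K$ is projective. To see that $K$ is closed, suppose $K\unlhd L\leq F$ is an essential extension of $K$ inside $F$. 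Then $L/K$ is an essential (in fact singular) submodule of $M=F/K$, and the point is that neat-flatness forces $L/K=0$. Concretely, if $L/K\neq 0$ then it contains a simple submodule $S\cong S'\leq M$ (using that $L/K$, being essential over $0$ is nonzero, contains a simple module because every nonzero module containing an essential... ), and this simple submodule yields a nonzero map $S\to M$ whose image lies in the singular submodule; since the sequence is neat exact, this map lifts to $F$, producing a simple submodule of $F$ mapping isomorphically, contradicting that $L$ is essential over $K$. This last contradiction step is the technical heart of the direction, so care is needed: the cleanest route is to argue that any neat submodule $K$ with $F/K$ neat-flat must already be closed, which follows because neat submodules in a right $C$-ring are closed and $\sum$-$CS$ rings are right $C$-rings.

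\medskip

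\noindent\emph{The implication $(1)\Rightarrow(2)$.} Assume every neat-flat right $R$-module is projective. To show $R$ is right $\sum$-$CS$, I must show that every direct sum $R^{(I)}$ of copies of $R_R$ is extending, i.e.\ every closed submodule is a direct summand. So let $K$ be a closed submodule of $F=R^{(I)}$. Since closed submodules are neat by \cite[Proposition 5]{Stenstrom:highsubmodulesandpurity}, the inclusion $K\hookrightarrow F$ is a neat monomorphism, hence $\ShortExactSequence{K}{}{F}{}{F/K}$ is neat exact with $F$ free. By Lemma~\ref{lemma:neat-flat}(3), $M:=F/K$ is neat-flat. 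By hypothesis $(1)$, $M$ is therefore projective, so the sequence splits and $K$ is a direct summand of $F$. As $K$ was an arbitrary closed submodule of an arbitrary $R^{(I)}$, every such $F$ is extending, which is exactly the statement that $R$ is right $\sum$-$CS$.

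\medskip

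\noindent\emph{Main obstacle.} The forward direction $(1)\Rightarrow(2)$ is essentially formal once one knows closed submodules are neat, so the substantive work is in $(2)\Rightarrow(1)$. The delicate point there is verifying that in a $\sum$-$CS$ (hence right $C$-ring) setting the kernel $K$ of the chosen neat presentation is genuinely closed in $F$; I expect the correct argument to invoke \cite[Theorem 5]{generalov}, namely that over a right $C$-ring every neat submodule is closed, so that the neat exactness of $\ShortExactSequence{K}{}{F}{}{M}$ directly gives $K$ closed in $F$, whence a direct summand. Thus the real content is to first establish that $\sum$-$CS$ implies right $C$-ring (or to cite it), and then to chain ``neat $\Rightarrow$ closed $\Rightarrow$ direct summand'' to conclude projectivity of $M$.
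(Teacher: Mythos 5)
Your proposal is correct and follows essentially the same route as the paper: $(1)\Rightarrow(2)$ by noting closed submodules are neat so the quotient is neat-flat, hence projective, hence the sequence splits; and $(2)\Rightarrow(1)$ by the chain neat $\Rightarrow$ closed $\Rightarrow$ direct summand, using \cite[Theorem 5]{generalov} over a right $C$-ring. The single ingredient you leave as ``establish or cite'' --- that a right $\sum$-$CS$ ring is a right $C$-ring --- is exactly what the paper supplies, via \cite[Theorem 3.18]{oshiro} ($\sum$-$CS$ rings are right and left perfect) and \cite[Theorem 28.4]{AF}; this citation is genuinely needed, since your abandoned direct contradiction sketch breaks precisely at the point where a nonzero module must be shown to contain a simple submodule, which fails without such a hypothesis.
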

\begin{proof}$(1)\Rightarrow (2)$ Let $P$ be a projective $R$-module and $N$ be a closed submodule of $P$. Then $N$ is a neat submodule of $P$. So that $P/N$ is neat-flat by Lemma \ref{lemma:neat-flat} and so $P/N$ is projective by (1). Therefore the sequence $0\rightarrow N \rightarrow P \rightarrow P/N\rightarrow 0$ splits, and so $N$ is a direct summand of $P$. Hence $R$ is a  $\sum$-$CS$ ring.

$(2)\Rightarrow (1)$ Every right  $\sum$-$CS$ ring is both  right
and left perfect by \cite[Theorem 3.18]{oshiro}. Hence, $R$ is a
right $C$-ring by \cite[Theorem 28.4]{AF}. Let $M$ be a neat-flat
right $R$-module. Then there is a neat exact sequence
$\mathbb{E}:\,\, 0\to K\hookrightarrow P\to\ M\to 0$ with $P$
projective by
 Lemma \ref{lemma:neat-flat}. Since $R$ is right $C$-ring, $K$ is closed in $P$ by \cite[Theorem 5]{generalov}. Hence the sequence $\mathbb{E}$ splits by (2), and so $M$ is projective.
\end{proof}

\begin{theorem}\label{finitelygeneratedneatflat}Let $R$ be a ring. The following are equivalent.
\begin{enumerate}
\item [(1)]Every finitely generated neat-flat right $R$-module is projective.
\item [(2)]$R$ is a right $C$-ring and every finitely generated free right $R$-module is extending.
\end{enumerate}
\end{theorem}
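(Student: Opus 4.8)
The plan is to mirror the proof of Theorem~\ref{Hring}, but to keep every module finitely generated and to replace the global $\sum$-$CS$ hypothesis by the combination of the $C$-ring property with the extending property of finitely generated free modules. The three tools I would use throughout are Lemma~\ref{lemma:neat-flat} (neat-flatness can be read off from any projective presentation, and every short exact sequence ending in a neat-flat module is neat exact), Generalov's result that every neat submodule is closed precisely when $R$ is a right $C$-ring, and the fact recorded just before Proposition~\ref{Proposition: neat-flat=socle zero} that a module with zero socle is automatically neat-flat.

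For $(2)\Rightarrow(1)$, let $M$ be a finitely generated neat-flat module. Since $M$ is finitely generated I would choose an epimorphism onto $M$ from a finitely generated free module $F$, obtaining a short exact sequence $0\to K\to F\to M\to 0$. Because $M$ is neat-flat, this sequence is neat exact by Lemma~\ref{lemma:neat-flat}. As $R$ is a right $C$-ring, Generalov's theorem forces $K$ to be closed in $F$; and since $F$ is finitely generated free, it is extending by hypothesis, so the closed submodule $K$ is a direct summand of $F$. Hence the sequence splits and $M$ is projective.

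For $(1)\Rightarrow(2)$ there are two things to establish. To see that every finitely generated free module $F$ is extending, I would take a closed submodule $K\le F$; since closed submodules are neat, the quotient $F/K$ is neat-flat by Lemma~\ref{lemma:neat-flat}, and it is finitely generated, hence projective by $(1)$. Therefore $0\to K\to F\to F/K\to 0$ splits and $K$ is a direct summand, so $F$ is extending. To see that $R$ is a right $C$-ring I would argue by contradiction: if it were not, there would be a proper essential right ideal $I$ with $\Soc(R/I)=0$. The cyclic module $R/I$ then has zero socle, so it is neat-flat and finitely generated, whence projective by $(1)$; thus $0\to I\to R\to R/I\to 0$ splits, making $I$ a direct summand of $R$. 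But an essential direct summand of $R$ must equal $R$, contradicting the properness of $I$.

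The step I expect to require the most care is the $C$-ring half of $(1)\Rightarrow(2)$: the correct witness to the failure of the $C$-ring condition is the cyclic zero-socle module $R/I$, and the argument hinges on the observation that its neat-flatness comes for free from having zero socle, so that hypothesis $(1)$ applies and the essential-summand contradiction is triggered. The remaining steps are routine once Lemma~\ref{lemma:neat-flat} and Generalov's theorem are in hand.
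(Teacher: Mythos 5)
Your proposal is correct and follows essentially the same route as the paper's proof: the same pullback-free argument via Lemma~\ref{lemma:neat-flat} and Generalov's theorem for $(2)\Rightarrow(1)$, and the same two witnesses (a closed submodule of a finitely generated free module, and a proper essential right ideal $I$ with $\Soc(R/I)=0$) for $(1)\Rightarrow(2)$. The only cosmetic difference is that you invoke the zero-socle remark to get neat-flatness of $R/I$ directly, whereas the paper first notes that $I$ is a neat ideal and then applies Lemma~\ref{lemma:neat-flat}; these are the same observation.
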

\begin{proof}$(1)\Rightarrow (2)$ Let $I$ be an essential right ideal of $R$ with $\Soc(R/I)=0$. Then $\Hom(S, R/I)=0$ for each simple right $R$-module $S$ and hence $I$ is neat ideal of $R$. So $R/I$ is neat-flat by Lemma \ref{lemma:neat-flat}. But it is projective by (1), and so $I$ is direct summand of $R$. This is contradict with essentiality of $I$ in $R$. So that $R$ is a right $C$-ring.\\
Let $F$ be a finitely generated free right $R$-module and $K$ a
closed submodule of $F$. Since every closed submodule is neat, $F/K$
is neat-flat by Lemma \ref{lemma:neat-flat}. Then $F/K$ is
projective by (1), and so $K$ is a direct summand of $F$.

$(2)\Rightarrow (1)$ Let $M$ be a finitely generated neat-flat right
$R$-module. Then there is an exact sequence $0\to \Ker
(f)\hookrightarrow F\to M\to 0$ with $F$ finitely generated free
right $R$-module. By Lemma \ref{lemma:neat-flat} $\Ker(f)$ is neat
submodule of $F$. Since $R$ is $C$-ring, $\Ker (f)$ is closed
submodule of $F$ by \cite[Theorem 5]{generalov}. Then $0\to \Ker
(f)\hookrightarrow F\to M\to 0$ is a split exact sequence. So $M$ is
projective.
\end{proof}

Following the proof of Theorem \ref{finitelygeneratedneatflat}, we
obtain the following corollary.

\begin{corollary}\label{cor:cyclic neat-flat} Every cyclic neat-flat right $R$-module is projective if and only if $R$ is both  right $CS$  and right $C$-ring.
\end{corollary}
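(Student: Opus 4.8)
The plan is to mimic the proof of Theorem~\ref{finitelygeneratedneatflat}, specializing the argument from finitely generated free modules to the single free module $R_R$ itself, since a cyclic module is precisely a quotient $R/I$ for a right ideal $I$. Recall that Theorem~\ref{finitelygeneratedneatflat} characterizes when finitely generated neat-flat modules are projective via the right $C$-ring condition together with an extending condition on finitely generated free modules; here the cyclic restriction should collapse that extending hypothesis from ``every finitely generated free module is extending'' down to ``$R_R$ is extending'', i.e. $R$ is right $CS$.

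For the forward direction, I would assume every cyclic neat-flat right $R$-module is projective and derive both that $R$ is a right $C$-ring and that $R$ is right $CS$. The $C$-ring part copies the first paragraph of the proof of Theorem~\ref{finitelygeneratedneatflat} verbatim: take an essential right ideal $I$ with $\Soc(R/I)=0$, observe $\Hom(S,R/I)=0$ for every simple $S$ so $I$ is neat in $R$, conclude $R/I$ is neat-flat by Lemma~\ref{lemma:neat-flat}, hence projective by hypothesis, hence $I$ is a direct summand of $R_R$---contradicting essentiality unless no such proper essential $I$ exists, which is exactly the $C$-ring condition. For the $CS$ part, let $K$ be a closed submodule of $R_R$; since closed submodules are neat, $R/K$ is neat-flat, therefore projective by hypothesis, so the sequence $0\to K\to R\to R/K\to 0$ splits and $K$ is a direct summand. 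Thus $R_R$ is extending, i.e. $R$ is right $CS$.

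For the converse, suppose $R$ is both right $CS$ and right $C$-ring, and let $M$ be a cyclic neat-flat module. Write $M$ as a quotient of $R_R$, giving an exact sequence $0\to \Ker(f)\to R\to M\to 0$. By Lemma~\ref{lemma:neat-flat} the submodule $\Ker(f)$ is neat in $R$; since $R$ is a right $C$-ring, \cite[Theorem 5]{generalov} upgrades neat to closed, so $\Ker(f)$ is closed in $R_R$. The right $CS$ hypothesis then makes $\Ker(f)$ a direct summand, the sequence splits, and $M$ is projective.

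I do not expect a genuine obstacle here, since every step is an instantiation of machinery already assembled in the excerpt (Lemma~\ref{lemma:neat-flat} for neat-flatness of quotients, \cite[Theorem 5]{generalov} linking neat and closed submodules over $C$-rings, and the direct-summand/splitting equivalence). The only point requiring minor care is confirming that the extending condition appearing in Theorem~\ref{finitelygeneratedneatflat} really does specialize to the $CS$ condition on $R_R$ alone in the cyclic case: the quotients arising are all of the form $R/K$, so only closed submodules of $R_R$---not of arbitrary finitely generated free modules---are tested, which is precisely the definition of $R$ being right $CS$. That bookkeeping is what lets the ``finitely generated free extending'' hypothesis shrink to ``right $CS$,'' and it is the one place where I would state explicitly why the weaker hypothesis suffices.
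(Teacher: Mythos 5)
Your proposal is correct and is exactly what the paper intends: the paper gives no separate proof for this corollary, stating only that it follows the proof of Theorem~\ref{finitelygeneratedneatflat}, and your specialization of that argument to quotients of $R_R$ (neat ideal $\Rightarrow$ neat-flat cyclic quotient for the forward direction, and neat $\Rightarrow$ closed $\Rightarrow$ summand via \cite[Theorem 5]{generalov} and the $CS$ hypothesis for the converse) is precisely that argument. Your closing remark about why the extending hypothesis collapses to $R_R$ alone in the cyclic case is the right bookkeeping and matches the paper's intent.
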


\begin{remark}\label{socleseries}Let $M$ be a right $R$-module. Then the socle series $\{S_{\alpha}\}$ of $M$ is defined as: $S_1=\Soc(M)$, $S_{\alpha}/S_{\alpha -1}=\Soc(M/S_{\alpha -1}),$ and for a limit ordinal $\alpha$, $S_{\alpha}=\cup _{\beta < \alpha}S_{\beta}.$ Put $S=\cup \{S_{\alpha}\}$. Then, by construction $M/S$ has zero socle. $M$ is semiartinian (i.e. every proper factor of $M$ has a simple module) if and only if $S=M$ (see, for example, \cite{extendingmodules}).
\end{remark}

From the proof of Theorem \ref{Hring}, we see that the condition
that, every free right $R$-module is extending implies $R$ is a
right $C$-ring. In the following example we show that,  if every
finitely generated free right $R$-module is extending, then $R$ need
not be a right $C$-ring. Hence the right $C$-ring condition in
\ref{finitelygeneratedneatflat} is necessary.

\begin{example} Let $R$ be the ring of all linear transformations (written on the left) of an infinite dimensional vector space over a division ring. Then $R$ is prime, regular, right self-injective and $\Soc(R_R)\neq 0$ by \cite[Theorem 9.12]{Goodearl:vonneumannregularrings}. As $R$ is a prime ring, $\Soc(R_R)$ is an essential ideal of $R_R.$ Let $S$ be as in Remark \ref{socleseries}, for $M=R$. Then $S \neq R$, by \cite[Lemma 1(2)]{Clark-Smith:OnSemiartinianandinjectivityconditions}. Since $R/S$ has zero socle, $S$ is a neat submodule of $R_R$. On the other hand, $S$ is not a closed submodule of $R$, otherwise $S$ would be a direct summand of $R$ because $R$ is right self injective (i.e. extending). Therefore $R$ is not a right $C$-ring. Also, as $R$ is right self injective  $R^n$ is injective, and so extending for every $n \geq 1.$
\end{example}

\section{Rings whose simple Right modules are finitely presented}

In this section, we consider neat-flat modules over the rings whose
simple right modules are finitely presented. The reason for
considering these rings is that, the Auslander-Bridger tranpose of
simple right $R$-modules is a finitely presented left $R$-module
over such rings.

\begin{definition}
Let $R$ be a ring and $n$ a nonnegative integer. A right $R$-module
$M$ is called $n$-presented if it has a finite $n$-presentation,
i.e., there is an exact sequence $F_{n}\to F_{n-1}\to\ldots F_{1}\to
F_{0}\to M\to 0$ in which every $F_{i}$, is a finitely generated
free right $R$-module \cite{ding:onncoherentrings}.
\end{definition}

\begin{lemma}\cite[Lemma 2.7]{ding:onncoherentrings}\label{ding2.7}
Let $R$ and $S$ be rings, and $n$ a fixed positive integer. Consider
the situation $(_{R}A, _{R}B_{S}, C_{S})$ with $_RA$ $n$-presented
and $C_S$ injective. Then there is an isomorphism

 $$\Tor^{R}_{n-1}(\Hom_{_{S}}(B, C), A)\cong\Hom_{S}(\Ext^{n-1}_{R}(A, B), C)$$.

\end{lemma}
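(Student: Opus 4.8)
The plan is to compute both sides from a single finite $n$-presentation of $_RA$ and to compare the resulting complexes via a natural duality isomorphism that is available on finitely generated projectives. Since $_RA$ is $n$-presented, I would fix an exact sequence $\cdots \to P_n \to P_{n-1} \to \cdots \to P_1 \to P_0 \to A \to 0$ in which $P_0,\ldots,P_n$ are finitely generated free left $R$-modules, and extend it to a full projective resolution $P_\bullet \to A$. With this resolution, $\Ext^{n-1}_R(A,B)$ is the $(n-1)$-th cohomology of the cochain complex $\Hom_R(P_\bullet,B)$ of right $S$-modules (using the bimodule $_RB_S$), while $\Tor^R_{n-1}(\Hom_S(B,C),A)$ is the $(n-1)$-th homology of $\Hom_S(B,C)\otimes_R P_\bullet$, where $\Hom_S(B,C)$ carries the right $R$-structure coming from $_RB$.

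The key ingredient is the natural homomorphism $\varphi_P\colon \Hom_S(B,C)\otimes_R P \to \Hom_S(\Hom_R(P,B),C)$ defined by $\theta\otimes p \mapsto [\, g \mapsto \theta(g(p))\,]$. First I would check that $\varphi_R$ is an isomorphism via the canonical identifications $\Hom_S(B,C)\otimes_R R \cong \Hom_S(B,C)$ and $\Hom_R(R,B)\cong B$; since both functors are additive and $\varphi$ is natural, $\varphi_P$ is then an isomorphism for every finitely generated projective $P$, in particular for $P_0,\ldots,P_n$.

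Next I would exploit that $C_S$ is injective, so $\Hom_S(-,C)$ is an exact contravariant functor and therefore commutes with (co)homology. Applying it to $\Hom_R(P_\bullet,B)$ yields a natural isomorphism $\Hom_S(\Ext^{n-1}_R(A,B),C)\cong H_{n-1}(\Hom_S(\Hom_R(P_\bullet,B),C))$, where the right-hand complex is obtained by reversing the arrows of $\Hom_R(P_\bullet,B)$. By naturality, the family $\{\varphi_{P_i}\}$ is a chain map, and since $\varphi_{P_i}$ is an isomorphism for $i\le n$ it identifies the complex $\Hom_S(\Hom_R(P_\bullet,B),C)$ with $\Hom_S(B,C)\otimes_R P_\bullet$ in every degree relevant to the $(n-1)$-th homology. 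Hence $H_{n-1}(\Hom_S(\Hom_R(P_\bullet,B),C))\cong H_{n-1}(\Hom_S(B,C)\otimes_R P_\bullet)=\Tor^R_{n-1}(\Hom_S(B,C),A)$, and composing the two isomorphisms gives the claim.

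The main obstacle I anticipate is bookkeeping rather than depth: one must keep the left/right module structures and the variance straight (the functor $\Hom_S(-,C)$ sends the $(n-1)$-th cohomology of $\Hom_R(P_\bullet,B)$ to the $(n-1)$-th homology of the dualized complex), and one must confirm that only $P_{n-2},P_{n-1},P_n$ enter the computation of the $(n-1)$-th homology, so that the finiteness supplied by the $n$-presentation is exactly what makes each $\varphi_{P_i}$ an isomorphism precisely where it is needed. Checking that $\varphi$ is genuinely natural, hence a chain map, is the one point deserving care, but it reduces to the case $P=R$ by additivity.
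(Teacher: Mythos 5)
The paper offers no proof of this lemma: it is quoted verbatim, as a citation, from Chen and Ding's paper on $n$-coherent rings, so there is no internal proof to compare your argument against. Judged on its own, your proof is correct and is the standard self-contained argument: fix a resolution $P_\bullet \to A$ that is finitely generated free in degrees $\le n$, use the natural evaluation map $\varphi_P\colon \Hom_S(B,C)\otimes_R P \to \Hom_S(\Hom_R(P,B),C)$, and use that $\Hom_S(-,C)$ is exact (because $C_S$ is injective), hence converts the $(n-1)$-st cohomology of $\Hom_R(P_\bullet,B)$ into the $(n-1)$-st homology of the dualized complex; the chain map $\{\varphi_{P_i}\}$ is an isomorphism in all degrees relevant to $H_{n-1}$, which gives the claim. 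Two small points deserve repair, neither fatal. First, when $n=1$ there is no degree $n-2=-1$; here $H_0$ is a cokernel computation involving only $P_0$ and $P_1$, so the argument still closes, but your sentence that ``only $P_{n-2},P_{n-1},P_n$ enter'' needs this case distinction. Second, naturality of $\varphi$ does not ``reduce to the case $P=R$ by additivity'': naturality is checked directly from the defining formula (for $f\colon P\to P'$, both composites send $\theta\otimes p$ to the map $g'\mapsto\theta\bigl(g'(f(p))\bigr)$), whereas it is the \emph{isomorphism} property of $\varphi_P$ that reduces to $P=R$ via additivity and passage to direct summands. With those adjustments your proof is complete, and it is arguably more transparent than an induction-with-dimension-shifting treatment of the same statement.
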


\begin{proposition}\cite[Proof of Proposition 5.3.9.]{Relativehomologicalalgebra}\label{prop. M is pure in M++}
 Every R-module $M$ is a pure submodule of a pure injective R-module $M^{++}$.
\end{proposition}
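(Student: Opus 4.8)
The final statement to prove is Proposition \ref{prop. M is pure in M++}: Every $R$-module $M$ is a pure submodule of a pure injective $R$-module $M^{++}$.

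The plan is to use the standard character module / double dual construction together with the evaluation map. First I would recall the relevant definitions: for a right $R$-module $M$, the character module is $M^{+} = \Hom_{\Z}(M, \Q/\Z)$, which is naturally a left $R$-module, and $M^{++} = (M^{+})^{+}$ is again a right $R$-module. The canonical evaluation homomorphism $\iota_{M}\colon M \to M^{++}$ sends $x \in M$ to the map $\operatorname{ev}_{x}\colon M^{+} \to \Q/\Z$ given by $\operatorname{ev}_{x}(\varphi) = \varphi(x)$. The first routine check is that $\iota_{M}$ is an $R$-module homomorphism and that it is injective; injectivity follows from the fact that $\Q/\Z$ is an injective cogenerator of the category of abelian groups, so for any $0 \neq x \in M$ there is a homomorphism $\varphi \in M^{+}$ with $\varphi(x) \neq 0$, whence $\operatorname{ev}_{x} \neq 0$.

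The two substantive claims are that $M^{++}$ is pure injective and that the image $\iota_{M}(M)$ is pure in $M^{++}$. For pure injectivity, I would invoke the well-known fact that the character module $N^{+}$ of any module $N$ is pure injective; applying this to $N = M^{+}$ gives that $M^{++} = (M^{+})^{+}$ is pure injective. For purity of the embedding, the key tool is the relationship between purity and exactness after applying the character functor: a short exact sequence $\mathbb{E}$ is pure exact precisely when $\mathbb{E}^{+}$ splits, and more usefully, the composite $\iota_{M^{+}} \circ (\iota_{M})^{+}$ on $M^{+}$ is the identity (this is the standard triangle identity for the evaluation maps, $(\iota_{M})^{+} \circ \iota_{M^{+}} = \operatorname{id}_{M^{+}}$). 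This splitting of the dual of $\iota_{M}$ is exactly what forces $\iota_{M}$ to be a pure monomorphism: the sequence $0 \to M \to M^{++} \to \operatorname{Coker}(\iota_{M}) \to 0$ has a dual sequence that splits, and by the characterization of pure exactness via character modules, the original sequence is pure exact.

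The main obstacle, and the step requiring the most care, will be verifying the triangle identity $(\iota_{M})^{+} \circ \iota_{M^{+}} = \operatorname{id}_{M^{+}}$ cleanly, since it involves chasing an element $\varphi \in M^{+}$ through two evaluation maps and confirming that the composite returns $\varphi$; this is a purely formal diagram computation but is the crux that makes the splitting argument work. Once that identity is in hand, the deduction that a monomorphism whose character-dual splits must be pure is immediate from the standard characterization of pure exact sequences (namely, $\mathbb{E}$ is pure exact if and only if $\mathbb{E}^{+}$ splits). I would therefore organize the proof as: (i) define $\iota_{M}$ and check it is a well-defined injective $R$-homomorphism using that $\Q/\Z$ is a cogenerator; (ii) cite that character modules are pure injective to handle $M^{++}$; (iii) establish the triangle identity to see that $(\iota_{M})^{+}$ is a split epimorphism; and (iv) conclude purity from the character-module criterion for pure exactness.
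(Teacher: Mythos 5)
Your proposal is correct and follows essentially the same route as the source the paper cites for this result (the proof of Proposition 5.3.9 in Enochs--Jenda): embed $M$ into $M^{++}$ via the evaluation map, use that character modules are pure injective, and deduce purity from the fact that $(\iota_{M})^{+}$ splits because of the triangle identity, combined with the criterion that a sequence is pure exact if and only if its character dual splits. One small slip: the composite you mean is $(\iota_{M})^{+}\circ\iota_{M^{+}}=\operatorname{id}_{M^{+}}$ (as in your displayed formula), not $\iota_{M^{+}}\circ(\iota_{M})^{+}$, which is an endomorphism of $M^{+++}$ rather than of $M^{+}$.
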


Let $M$ be a right $R$-module. $M$ is called absolutely pure (or
FP-injective)
 if $\Ext^{1}(N, M) = 0$ for any finitely presented right $R$-module $N$, i.e. $M$ is a pure submodule of its injective hull $E(M)$. For any right $R$-module $M$, the character module $M^{+}$ is a pure injective right $R$-module, (see, \cite[Proposition 5.3.7]{Relativehomologicalalgebra}).

\begin{remark}Note that, if every simple right $R$-module is finitely presented, then every pure submodule is neat. So that, in this case, any right  flat $R$-module is neat-flat.
\end{remark}

Using, Theorem \ref{transpoz2}, we obtain the following
characterization of neat-flat modules.

\begin{theorem}\label{charecteronscoherent}\begin{sloppypar}
Let $R$ be a ring such that every simple right $R$-module is
finitely presented. Then $M$ is a neat-flat right $R$-module if and
only if $\Tor_{1}(M,\, Tr(S))=0$ for each simple right $R$-module
$S$.
\end{sloppypar}
\end{theorem}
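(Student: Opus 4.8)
The plan is to combine the projective-resolution characterization of neat-flat modules from Lemma~\ref{lemma:neat-flat} with the conjugacy between the functors $\pi^{-1}$ and $\tau^{-1}$ established in Theorem~\ref{transpoz2}. Since every simple right $R$-module $S$ is finitely presented, its Auslander--Bridger transpose $Tr(S)$ is a well-defined finitely presented left $R$-module, so the hypothesis that makes the transpose machinery applicable is in force.

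First I would translate neat-flatness into the language of proper classes. Let $\AClassOfModuless$ denote the class of all simple right $R$-modules. By definition the class of neat-exact sequences is exactly $\pi^{-1}(\AClassOfModuless)$, the proper class projectively generated by $\AClassOfModuless$. By Lemma~\ref{lemma:neat-flat}, $M$ is neat-flat precisely when every (equivalently, some projective-ending) short exact sequence $\mathbb{E}:0\to K\to F\to M\to 0$ belongs to this class. Now I would invoke Theorem~\ref{transpoz2}, which gives $\pi^{-1}(\AClassOfModuless)=\tau^{-1}(Tr(\AClassOfModuless))$, where $Tr(\AClassOfModuless)=\{Tr(S)\mid S\in\AClassOfModuless\}$. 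Thus $\mathbb{E}$ is neat-exact if and only if $\mathbb{E}$ is $Tr(S)$-pure for every simple $S$, i.e. if and only if $\mathbb{E}\otimes Tr(S)$ remains exact for every simple right $R$-module $S$.

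The final step is to read off the Tor condition from the tensored sequence. Choosing a resolution with $F$ free (hence flat), the short exact sequence $0\to K\to F\to M\to 0$ gives the long exact sequence in $\Tor$: the exactness of $\mathbb{E}\otimes Tr(S)$ is equivalent to injectivity of $K\otimes Tr(S)\to F\otimes Tr(S)$, which by the long exact sequence
$$
\Tor_{1}(M,\,Tr(S))\to K\otimes Tr(S)\to F\otimes Tr(S)
$$
holds for all simple $S$ if and only if $\Tor_{1}(M,\,Tr(S))=0$ for every simple right $R$-module $S$. (One could also phrase this more directly through Proposition~\ref{transpoz1}, since $\Hom(S,\mathbb{E})$ exact is equivalent to $\mathbb{E}\otimes Tr(S)$ exact.) Combining the two equivalences yields the theorem.

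I expect no serious obstacle here: the real content is packaged in the cited results, and the proof is essentially a sequence of translations. The one point that needs care is the role of the finite-presentation hypothesis. It is used twice: to guarantee that $Tr(S)$ exists as a finitely presented left module (so that Theorem~\ref{transpoz2} applies to the \emph{set} of transposes), and implicitly to ensure the $\Tor$ groups are taken against a genuine left $R$-module. I would state these dependencies explicitly so that the equivalence $\pi^{-1}(\AClassOfModuless)=\tau^{-1}(Tr(\AClassOfModuless))$ is unambiguously justified before passing to the homological reformulation.
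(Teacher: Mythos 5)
Your proposal is correct and follows essentially the same route as the paper: both use Lemma~\ref{lemma:neat-flat} to reduce to a single projective-ending sequence $\mathbb{E}:0\to K\to F\to M\to 0$, apply Theorem~\ref{transpoz2} to identify neat-exactness of $\mathbb{E}$ with exactness of $\mathbb{E}\otimes Tr(S)$, and then read off $\Tor_{1}(M,Tr(S))=0$ from the long exact Tor sequence using $\Tor_{1}(F,Tr(S))=0$. Your added remarks on Proposition~\ref{transpoz1} and on where finite presentation of the simples is needed are consistent with, and only elaborate on, the paper's argument.
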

\begin{proof}\begin{sloppypar}
 Let $M$ be  an $R$-module and $\mathbb{E}:0\to K\overset{f}{\to} F\to M\to 0$
 be a short exact sequence with $F$ projective.
Let $S$ be simple right $R$-module. Tensoring $\mathbb{E}$ by $
Tr(S)$ we get the exact sequence $$0=\Tor_{1}(F,
Tr(S))\to\Tor_{1}(M, Tr(S))\to K \otimes Tr(S)\overset{f \otimes
1_{Tr(S)}}{\To}F \otimes Tr(S).$$

Now, suppose $M$ is  neat-flat. Then $\mathbb{E}$ is neat-exact by
Lemma \ref{lemma:neat-flat}. So that $f \otimes 1_{Tr(S)}$ is monic,
by Theorem \ref{transpoz2}. Hence $\Tor_{1}(M, Tr(S))=0$.

Conversely, suppose $\Tor_{1}(M,Tr(S))=0$ for each simple right
$R$-module $S$. Then the sequence $0\to K \otimes Tr(S) \to F\otimes
Tr(S)$ is exact, and so the sequence $0\to K\to F\to M\to 0$ is
neat-exact by Theorem \ref{transpoz2}. Then $M$ is neat-flat by
Lemma \ref{lemma:neat-flat}.
\end{sloppypar}
\end{proof}

\begin{corollary}\label{characterofabsolutelypure}Let $R$ be a ring such that every simple right $R$-module is finitely presented and $M$ be an arbitrary $R$-module. If $M$ is  absolutely pure, then $M^{+}$ is neat-flat.
\end{corollary}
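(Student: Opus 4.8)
The plan is to reduce neat-flatness of the character module $M^{+}$ to the vanishing of suitable $\Tor$ groups and then feed in the hypothesis that $M$ is absolutely pure. I regard $M^{+}$ as a right $R$-module (this is the case when $M$ is taken to be a left $R$-module), so that Theorem \ref{charecteronscoherent} and the remark preceding this corollary---every flat module is neat-flat once all simple right $R$-modules are finitely presented---apply to it. By Theorem \ref{charecteronscoherent} it suffices to show that $\Tor_{1}(M^{+},\, Tr(S))=0$ for every simple right $R$-module $S$. Here $Tr(S)$ is a finitely presented left $R$-module precisely because $S$ is finitely presented, as recorded at the start of this section, so the statement is well posed.

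The key step is a character-module duality identifying this $\Tor$ with the dual of an $\Ext$. Concretely, I would apply Lemma \ref{ding2.7} with base ring $\Z$, with $C=\Q/\Z$ (which is injective over $\Z$), with $B=M$ regarded as an $(R,\Z)$-bimodule, with $A=Tr(S)$, and with $n=2$. Since $\Hom_{\Z}(M,\Q/\Z)=M^{+}$, this yields a natural isomorphism
$$\Tor_{1}(M^{+},\, Tr(S)) \;\cong\; \Ext^{1}_{R}(Tr(S),\, M)^{+}.$$
Because $Tr(S)$ is finitely presented and $M$ is absolutely pure, $\Ext^{1}_{R}(Tr(S),\, M)=0$; hence its character module vanishes, $\Tor_{1}(M^{+},\, Tr(S))=0$, and $M^{+}$ is neat-flat by Theorem \ref{charecteronscoherent}.

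I expect the delicate point to be the application of Lemma \ref{ding2.7}, whose hypothesis requires the module $A=Tr(S)$ to be \emph{$2$-presented}, whereas a priori $Tr(S)$---being the Auslander--Bridger transpose of a finitely presented module---is only finitely presented; equivalently, one must know that $\Hom_{R}(S,R)$ is finitely generated. I would either verify this directly, or bypass Lemma \ref{ding2.7} altogether by using the always-valid duality $\Tor_{1}(M^{+},\, N)^{+}\cong \Ext^{1}_{R}(N,\, M^{++})$, obtained by dualizing a projective resolution of $N$ through the tensor--hom adjunction with $\Q/\Z$, together with the fact that $M$ embeds purely in $M^{++}$ (Proposition \ref{prop. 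M is pure in M++}). Conceptually, the same duality underlies the classical equivalence ``$M$ absolutely pure $\iff M^{+}$ flat'', after which the preceding remark finishes the proof; this is the form I would fall back on if the $2$-presentedness of $Tr(S)$ is not available under the standing hypothesis alone.
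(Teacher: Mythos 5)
Your primary argument is exactly the paper's proof: reduce, via Theorem~\ref{charecteronscoherent}, to showing $\Tor_{1}(M^{+},Tr(S))=0$ for each simple right $R$-module $S$, then identify $\Tor_{1}(M^{+},Tr(S))\cong \Ext^{1}_{R}(Tr(S),M)^{+}$ by applying Lemma~\ref{ding2.7} with $C=\Q/\Z$, $B={}_{R}M_{\Z}$, $A=Tr(S)$, $n=2$, and let absolute purity of $M$ kill the $\Ext$ group. Moreover, the delicate point you flag is real, and it applies verbatim to the paper's own proof: as stated, Lemma~\ref{ding2.7} gives the degree-$(n-1)=1$ isomorphism only when $A$ is $2$-presented, while the standing hypothesis yields only that $Tr(S)$ is finitely presented. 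From the exact sequence $0\to \Hom_{R}(S,R)\to P_{0}^{*}\to P_{1}^{*}\to Tr(S)\to 0$ of the transpose construction, $2$-presentedness of $Tr(S)$ is exactly the condition that $\Hom_{R}(S,R)$ be finitely generated, just as you say; this is clear when $S$ does not embed in $R_{R}$ (then $\Hom_{R}(S,R)=0$), but is not evident in general. The paper passes over this in silence, so on this point you are more careful than the published argument.

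However, neither of your proposed repairs closes the gap. You leave the first option (verifying that $\Hom_{R}(S,R)$ is finitely generated) undone, and the fallback is unsound. The duality $\Tor_{1}(M^{+},N)^{+}\cong \Ext^{1}_{R}(N,M^{++})$ is indeed valid for all $N$, but it merely restates the problem: to conclude $\Tor_{1}(M^{+},Tr(S))=0$ you would need $\Ext^{1}_{R}(Tr(S),M^{++})=0$, and the pure embedding $M\subseteq M^{++}$ of Proposition~\ref{prop. M is pure in M++} only yields that $\Ext^{1}_{R}(N,M)\to \Ext^{1}_{R}(N,M^{++})$ is monic for finitely presented $N$ --- the wrong direction. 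Since $M^{++}$ is pure-injective, requiring it to be absolutely pure is requiring it to be injective, and ``$M$ absolutely pure $\Rightarrow M^{++}$ injective'' is precisely condition (3) of Theorem~\ref{coherentring}, i.e.\ right coherence of $R$. Likewise, the ``classical equivalence'' you invoke, $M$ absolutely pure $\iff M^{+}$ flat, is not a general fact: it is condition (2) of Theorem~\ref{coherentring} (Cheatham--Stone) and again characterizes right coherent rings; only the implication ``$M^{+}$ flat $\Rightarrow M$ absolutely pure'' is unconditional. (Were that equivalence free, the corollary would be immediate from the Remark that flat modules are neat-flat under the standing hypothesis.) In sum: your main route coincides with the paper's, your criticism of it is legitimate, but the material you add in the attempt to patch it does not constitute a proof.
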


\begin{proof}Let $S$ be a simple right $R$-module. By our assumption $S$ is finitely presented, and so $Tr(S)$ is finitely presented $R$-module.
Then $\Ext^{1}(Tr(S), M)=0$, because $M$ is absolutely pure. We
have, $0=\Ext^{1}(Tr(S), M)^{+}\cong \Tor_{1}(M^{+},Tr(S))$ by Lemma
\ref{ding2.7}. Hence $\Tor_{1}(M^{+},Tr(S))= 0$, and so $M^{+}$ is
neat-flat by Theorem \ref{charecteronscoherent}.
\end{proof}

\begin{corollary}\label{characterofinjective}Let $R$ be a ring such that every simple
 $R$-module is finitely presented and $M$ be a right $R$-module. If $M$ is injective, then $M^{+}$ is neat-flat.
\end{corollary}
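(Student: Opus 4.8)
The plan is to reduce this corollary to the preceding one, Corollary \ref{characterofabsolutelypure}, by observing that the injectivity of $M$ is a strictly stronger hypothesis than absolute purity. Recall that $M$ is absolutely pure precisely when $\Ext^{1}(N, M) = 0$ for every finitely presented right $R$-module $N$. If $M$ is injective, then $\Ext^{1}(N, M) = 0$ for \emph{every} right $R$-module $N$, and in particular for every finitely presented one. Hence every injective module is absolutely pure.

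First I would record this implication explicitly: since $M$ is injective, $M$ is absolutely pure. The hypothesis on $R$ — that every simple right $R$-module is finitely presented — is exactly the standing assumption of Corollary \ref{characterofabsolutelypure}, so all the machinery there applies verbatim. I would then simply invoke that corollary: because $M$ is absolutely pure over such a ring, its character module $M^{+}$ is neat-flat. This completes the argument.

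There is no genuine obstacle here; the content of the result lives entirely in Corollary \ref{characterofabsolutelypure}, whose proof passes through the transpose identity $\Ext^{1}(Tr(S), M)^{+} \cong \Tor_{1}(M^{+}, Tr(S))$ of Lemma \ref{ding2.7} together with the characterization in Theorem \ref{charecteronscoherent}. The only thing to check for the present statement is the trivial containment of the class of injective modules inside the class of absolutely pure modules, and this is immediate from the definitions. Thus the corollary is a direct specialization, and I would present it in a single short paragraph citing the inclusion \emph{injective} $\Rightarrow$ \emph{absolutely pure} and then Corollary \ref{characterofabsolutelypure}.
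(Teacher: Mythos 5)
Your proposal is correct and matches the paper's intent exactly: the paper states this corollary without proof precisely because it is the immediate specialization of Corollary \ref{characterofabsolutelypure} via the standard inclusion \emph{injective} $\Rightarrow$ \emph{absolutely pure}, which is the very reduction you carry out.
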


\begin{lemma}\label{characterofneatflat}Let $R$ be a ring such that every simple $R$-module
 is finitely presented and $M$ be a right $R$-module. Then $M$ is neat-flat if and only if $M^{++}$ is neat-flat.
\end{lemma}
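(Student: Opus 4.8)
The plan is to route both implications through the $\Tor$-criterion of Theorem \ref{charecteronscoherent}, so that the lemma reduces to the single equivalence
$\Tor_1(M,Tr(S))=0 \Leftrightarrow \Tor_1(M^{++},Tr(S))=0$ for every simple right $R$-module $S$. Here the standing hypothesis is used twice: each such $S$ is finitely presented, so $Tr(S)$ is a finitely presented left $R$-module and Theorem \ref{charecteronscoherent} applies to both $M$ and $M^{++}$.

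For the implication ``$M^{++}$ neat-flat $\Rightarrow$ $M$ neat-flat'' I would first record the short route, which needs no character computation: by Proposition \ref{prop. M is pure in M++} the module $M$ is a pure submodule of $M^{++}$, and the class of neat-flat modules coincides with the simple-projective modules (Theorem \ref{thm.neatflat=simpleprojective}), which is closed under pure submodules by Proposition \ref{propertiesofneatflats}. Hence $M$ is neat-flat, disposing of one direction at once.

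For the converse, which carries the content, I would fix an exact sequence $\mathbb{E}\colon 0\to K\to F\to M\to 0$ with $F$ free and run the following chain. If $M$ is neat-flat then $\Tor_1(M,Tr(S))=0$ for every simple $S$, i.e. $\mathbb{E}\otimes Tr(S)$ is exact. By Theorem \ref{conjugateofproperclasses} (applied to the class $\{Tr(S)\}$) this is equivalent to exactness of $\Hom(Tr(S),\mathbb{E}^{+})$, where $\mathbb{E}^{+}\colon 0\to M^{+}\to F^{+}\to K^{+}\to 0$. Since $F$ is projective, hence flat, its character $F^{+}$ is injective, so exactness of $\Hom(Tr(S),\mathbb{E}^{+})$ is exactly the vanishing $\Ext^{1}_{R}(Tr(S),M^{+})=0$. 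Thus neat-flatness of $M$ forces $\Ext^{1}_{R}(Tr(S),M^{+})=0$ for all $S$. Finally, applying Lemma \ref{ding2.7} precisely as in Corollary \ref{characterofabsolutelypure} (with $C=\Q/\Z$, $B=M^{+}$, $A=Tr(S)$) yields a natural isomorphism $\Tor_1(M^{++},Tr(S))\cong \Ext^{1}_{R}(Tr(S),M^{+})^{+}$, whence $\Tor_1(M^{++},Tr(S))=0$, and Theorem \ref{charecteronscoherent} makes $M^{++}$ neat-flat. Reading the same chain backwards, using that a module vanishes iff its character vanishes, would in fact recover both implications simultaneously.

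The step I expect to demand the most care is the application of Lemma \ref{ding2.7}: it is stated for an $n$-presented first argument and is used here with $n-1=1$, so one must confirm that $Tr(S)$ really admits a presentation of the required length over these rings (the same point is already implicit in Corollary \ref{characterofabsolutelypure}). The remaining delicacy is purely the bookkeeping of sides — $S$ is a right module while $Tr(S)$ and $M^{+}$ are left modules — together with the two facts that make the middle step legitimate, namely the exactness of $(-)^{+}$ and the injectivity of $F^{+}$, which is what lets me read off $\Ext^{1}_{R}(Tr(S),M^{+})=0$ from membership of $\mathbb{E}^{+}$ in $\pi^{-1}(\{Tr(S)\})$.
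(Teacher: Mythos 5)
Your proof is correct, but its main implication is organized differently from the paper's. The easy direction ($M^{++}$ neat-flat $\Rightarrow$ $M$ neat-flat) is exactly the paper's argument: purity of $M$ in $M^{++}$ via Proposition \ref{prop. M is pure in M++}, plus Theorem \ref{thm.neatflat=simpleprojective} and Proposition \ref{propertiesofneatflats}. For the hard direction, however, the paper never passes through the $\Tor$-criterion: it keeps the whole sequence $\mathbb{E}$ inside the proper-class formalism, applying Theorem \ref{transpoz2} and Theorem \ref{conjugateofproperclasses} twice to conclude that $\mathbb{E}^{++}$ is itself neat-exact, then quotes Corollary \ref{characterofinjective} to see that $F^{++}$ is neat-flat (since $F^{+}$ is injective) and finishes with Lemma \ref{lemma:neat-flat} (a neat quotient of a neat-flat module is neat-flat). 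You instead reduce everything to the chain $\Tor_1(M,Tr(S))=0 \Leftrightarrow \Ext^{1}_{R}(Tr(S),M^{+})=0 \Leftrightarrow \Tor_1(M^{++},Tr(S))=0$, using Theorem \ref{charecteronscoherent}, one application of Theorem \ref{conjugateofproperclasses}, the injectivity of $F^{+}$, and Lemma \ref{ding2.7} invoked directly. The two routes rest on the same three ingredients --- Sklyarenko's duality, the transpose theorem, and Chen--Ding's lemma --- but distribute them differently: the paper hides Lemma \ref{ding2.7} inside Corollary \ref{characterofinjective}, while you use it explicitly; conversely, the paper obtains the stronger intermediate fact that $\mathbb{E}^{++}$ is neat-exact, which your argument does not produce. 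Your version has the advantage that every link is an equivalence, so, as you observe, both implications follow at once and the purity argument becomes a redundant (though pleasant) shortcut. Finally, the caveat you raise about Lemma \ref{ding2.7} --- that with $n-1=1$ it literally requires $Tr(S)$ to be $2$-presented, whereas the standing hypothesis only yields that $Tr(S)$ is finitely presented --- is a genuine subtlety, but it is not a defect of your proof relative to the paper: the paper makes exactly the same application in Corollary \ref{characterofabsolutelypure}, on which its own proof (through Corollary \ref{characterofinjective}) silently depends.
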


\begin{proof}
\begin{sloppypar}Let $\mathcal{M}$ be the set of all representatives of simple right $R$-modules.
 Suppose $M$ is a neat-flat $R$-module. Then there exists a neat-exact sequence
 $\mathbb{E}:0\to K\to F\to M\to 0$ with $F$ projective by Lemma \ref{lemma:neat-flat}.
 By Theorem \ref{transpoz2},  $ \mathbb{E} \in\tau^{-1}(Tr(\mathcal{M}))$.
 Then $\mathbb{E}^{+}\in\pi^{-1}(Tr(\mathcal{M}))$ by Theorem \ref{conjugateofproperclasses}, and so
 $\mathbb{E}^{+}\in\tau^{-1}(\mathcal{M})$ by Theorem \ref{transpoz2}.
 Again by Theorem \ref{transpoz2} and Theorem \ref{conjugateofproperclasses} we have
 $\mathbb{E}^{++}:0\to K^{++}\to F^{++}\to M^{++}\to 0\in\pi^{-1}(\mathcal{M})
= \tau^{-1}(Tr(\mathcal{M})).$
 \end{sloppypar}

Since $F$ is projective, $F^{+}$ is injective by \cite[Theorem
3.52]{Rotman:HomologicalAlgebra}. Thus $F^{++}$ is neat-flat by
Corollary \ref{characterofinjective}. Then  $M^{++}$ is neat-flat,
since $E^{++}$ is neat exact, and neat-flat modules closed under
neat quotient by Lemma \ref{lemma:neat-flat}.

 Conversely, suppose $M^{++}$ is neat-flat.  Since $M$ is a pure submodule of $M^{++}$ by Proposition \ref{prop. M is pure in M++}, $M$ is neat-flat by Theorem \ref{thm.neatflat=simpleprojective} and Proposition \ref{propertiesofneatflats}.
\end{proof}

\begin{definition}A right $R$-module $M$ is called \emph{max-flat} if
$\Tor_{R}^{1}(M,R/I) = 0$ for every maximal left ideal $I$ of $R$
(see, \cite{Xiang:maxflatmaxinjective}).
\end{definition}
Note that  a right $R$-module $M$ is max-flat if and only if $M^{+}$
is $m$-injective by the standard isomorphism
$\Ext^{1}(S,M^{+})\cong\Tor_{1}(M, S)^{+}$ for all simple left
$R$-module $S$.

Using the similar arguments of \cite[Theorem
4.5]{Xiang:maxflatmaxinjective}, we can prove the following. The
proof is omitted.
\begin{theorem}\label{characterization} Let $R$ be a ring such that every simple right $R$-module is
 finitely presented and $M$ be a right $R$-module. Then the followings are hold.
\begin{enumerate}
\item[(1)]$M$ is an $m$-injective right $R$-module if and only if $M^{+}$ is max-flat.
\item[(2)]$M$ is an $m$-injective right $R$-module if and only if $M^{++}$ is $m$-injective.
\item[(3)]$M$ is a max-flat right $R$-module if and only if $M^{++}$ is max-flat.
\end{enumerate}
\end{theorem}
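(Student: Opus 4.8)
The plan is to isolate a single nontrivial ingredient and deduce all three equivalences from it together with the isomorphism $\Ext^{1}(S,M^{+})\cong\Tor_{1}(M,S)^{+}$ recorded just before the statement. Write $(-)^{+}=\Hom_{\Z}(-,\Q/\Z)$; since $\Q/\Z$ is an injective cogenerator, $(-)^{+}$ is exact and \emph{faithful}, i.e. $N=0$ if and only if $N^{+}=0$. Throughout I identify a maximal right ideal $I$ with the simple right module $S=R/I$, so that $M$ is $m$-injective exactly when $\Ext^{1}(S,M)=0$ for every simple right $R$-module $S$, while a left module $N$ is max-flat exactly when $\Tor_{1}(S,N)=0$ for every such $S$. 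Call the equivalence furnished by the remark preceding the theorem, valid for any module by symmetry of $\Q/\Z$, condition \emph{[N]}: $X$ is max-flat if and only if $X^{+}$ is $m$-injective.

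I would prove (1) first, as it is the only part that uses the standing hypothesis. Let $S$ be a simple right $R$-module; by assumption $S$ is finitely presented. Applying Lemma \ref{ding2.7} (in its right-module form) with $C=\Q/\Z$ injective and $B=M$, exactly as in the proof of Corollary \ref{characterofabsolutelypure}, yields a natural isomorphism
$$\Tor_{1}(S,M^{+})\;\cong\;\Ext^{1}(S,M)^{+}.$$
Since $(-)^{+}$ is faithful, $\Ext^{1}(S,M)=0$ if and only if $\Tor_{1}(S,M^{+})=0$. Ranging over all simple right $S$ gives: $M$ is $m$-injective $\iff\Tor_{1}(S,M^{+})=0$ for all $S$ $\iff M^{+}$ is max-flat, which is (1).

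Parts (2) and (3) then follow formally, with no further finiteness, by chaining (1) with \emph{[N]}. For (2): $M$ is $m$-injective $\overset{(1)}{\iff}M^{+}$ is max-flat $\overset{[N]}{\iff}M^{++}$ is $m$-injective. For (3): $M$ is max-flat $\overset{[N]}{\iff}M^{+}$ is $m$-injective $\overset{(1)}{\iff}M^{++}$ is max-flat, where in the second step I apply (1) to the left module $M^{+}$. Here (1) and \emph{[N]} are used in their left/right symmetric versions, which hold because the hypothesis and the duality $\Tor_{1}(S,X^{+})\cong\Ext^{1}(S,X)^{+}$ are symmetric.

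The delicate point — and the step I expect to be the real obstacle — is the character-module isomorphism $\Tor_{1}(S,M^{+})\cong\Ext^{1}(S,M)^{+}$ in (1). Choosing a presentation $0\to K\to R^{n}\to S\to0$ with $K$ finitely generated, both sides are computed through the evaluation map $\theta_{K}\colon K\otimes M^{+}\to\Hom(K,M)^{+}$, which is only \emph{surjective} when $K$ is merely finitely generated and is an isomorphism precisely when $K$ is finitely presented, i.e. when $S$ is $FP_{2}$. Surjectivity alone gives an epimorphism $\Tor_{1}(S,M^{+})\twoheadrightarrow\Ext^{1}(S,M)^{+}$, which already yields the implication ``$M^{+}$ max-flat $\Rightarrow M$ $m$-injective''; alternatively this implication is free, since $M$ embeds purely in $M^{++}$ by Proposition \ref{prop. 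M is pure in M++}, pure submodules are neat under the standing hypothesis (the remark preceding Theorem \ref{charecteronscoherent}), and a neat submodule of an $m$-injective module is $m$-injective by the long exact $\Ext$-sequence. The reverse implication ``$M$ $m$-injective $\Rightarrow M^{+}$ max-flat'' is exactly where the full strength of Lemma \ref{ding2.7} is required, so the main work is to confirm that its hypotheses genuinely deliver the $\Tor_{1}/\Ext^{1}$ isomorphism for finitely presented simple modules in the present setting, rather than only for $FP_{2}$ ones.
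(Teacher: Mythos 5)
The paper itself gives no proof of this theorem---it is explicitly omitted, with a pointer to the arguments of \cite[Theorem 4.5]{Xiang:maxflatmaxinjective}---so your proposal must be measured against that intended argument. Your skeleton is the right one: the duality \emph{[N]} costs nothing, both of your arguments for ``$M^{+}$ max-flat $\Rightarrow M$ $m$-injective'' are correct, and (2) does follow formally from (1) and \emph{[N]}. But the step you flag as ``the real obstacle'' is a genuine gap, and it cannot be closed by these methods under the stated hypothesis. Lemma \ref{ding2.7}, exactly as quoted in the paper, gives $\Tor_{n-1}(\Hom_{S}(B,C),A)\cong\Hom_{S}(\Ext^{n-1}_{R}(A,B),C)$ for \emph{$n$-presented} $A$; the degree-one instance therefore requires $S$ to be $2$-presented, i.e.\ the maximal right ideal $I$ with $S\cong R/I$ to be finitely \emph{presented}, whereas the standing hypothesis only makes $I$ finitely \emph{generated}. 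Your analysis of the evaluation map is the correct diagnosis: for finitely generated $I$ the map $\sigma_{I}\colon I\otimes M^{+}\to\Hom(I,M)^{+}$ is only surjective, and when $M$ is $m$-injective the chase yields $\Tor_{1}(S,M^{+})\subseteq\Ker\sigma_{I}$, which need not vanish. Nor is this an artifact of the method: if the degree-one duality held for every finitely presented module, then every ring would satisfy condition (2) of Theorem \ref{coherentring} and hence be right coherent. Consistently, Xiang's Theorem 4.5 is proved for max-coherent rings (maximal ideals finitely presented), which is exactly the $2$-presented hypothesis; so the ``main work'' you defer has, in effect, a negative answer, and what your argument actually proves is the theorem with ``every maximal right ideal of $R$ is finitely presented'' in place of the paper's hypothesis. (The same indexing issue infects the paper's own use of Lemma \ref{ding2.7} in Corollary \ref{characterofabsolutelypure}, where it is applied to the merely finitely presented module $Tr(S)$.)

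Separately, your derivation of (3) contains an error you did not flag: you apply ``the left/right symmetric version'' of (1) to the left module $M^{+}$ on the grounds that ``the hypothesis \dots is symmetric''. It is not. ``Every simple right $R$-module is finitely presented'' is a one-sided condition and gives no control over simple left modules; yet max-flatness of the right modules $M$ and $M^{++}$ is by definition tested against the simple \emph{left} modules $R/I$, $I$ a maximal left ideal, so the standing hypothesis cannot enter your chain for (3) at any point. This side-matching phenomenon is already visible in Theorem \ref{coherentring}, where the double-dual statement for flatness concerns \emph{left} modules under \emph{right} coherence, and in \cite{Xiang:maxflatmaxinjective}, where the three statements are distributed over the two sides precisely so that every test module lies on the side governed by the coherence hypothesis. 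The only part of (3) that is free is ``$M^{++}$ max-flat $\Rightarrow M$ max-flat'', since $M$ is pure in $M^{++}$ by Proposition \ref{prop. M is pure in M++} and vanishing of $\Tor_{1}(-,X)$ passes to pure submodules; the forward implication needs either a hypothesis on maximal \emph{left} ideals or a genuinely different argument.
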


\begin{proposition}\cite[Theorem 3]{septimi:minjective}\label{max-injective}The following are equivalent for a right $R$-module $M$:
\begin{enumerate}
\item [(1)]$M$ is an $m$-injective $R$-module.
\item [(2)]$\Soc(E(M)/M)=0$.
\end{enumerate}
\end{proposition}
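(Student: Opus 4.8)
The plan is to route both implications through the $\Ext$-characterisation of $m$-injectivity recorded in the introduction: $M$ is $m$-injective if and only if $\Ext^{1}_{R}(R/I,M)=0$ for every maximal right ideal $I$. Since the simple right $R$-modules are precisely the modules $R/I$ with $I$ a maximal right ideal, this says that $M$ is $m$-injective if and only if $\Ext^{1}(S,M)=0$ for every simple right $R$-module $S$. The whole argument will compare this $\Ext$-vanishing against the socle of $E(M)/M$ by means of the canonical exact sequence $0\to M\to E(M)\overset{\pi}{\to} E(M)/M\to 0$.

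First I would apply $\Hom(S,-)$ to this sequence. Because $E(M)$ is injective, $\Ext^{1}(S,E(M))=0$, so the long exact sequence collapses to an isomorphism $\Ext^{1}(S,M)\cong\operatorname{coker}\bigl(\Hom(S,E(M))\overset{\pi_{*}}{\to}\Hom(S,E(M)/M)\bigr)$. Hence $\Ext^{1}(S,M)=0$ for all simple $S$ is equivalent to $\pi_{*}$ being surjective for all simple $S$, which is exactly the assertion that $M$ is neat in $E(M)$. I would also record the elementary fact that, for any module $N$, one has $\Soc(N)=0$ if and only if $\Hom(S,N)=0$ for every simple $S$, since a nonzero map out of a simple module has simple image and that image lies in the socle.

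With this setup, $(2)\Rightarrow(1)$ is immediate: if $\Soc(E(M)/M)=0$ then $\Hom(S,E(M)/M)=0$ for every simple $S$, and therefore its quotient $\Ext^{1}(S,M)$ vanishes as well, so $M$ is $m$-injective.

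The main content is $(1)\Rightarrow(2)$, where the essentiality of $M$ in $E(M)$ does the work. Assuming $M$ is $m$-injective but $\Soc(E(M)/M)\neq 0$, I would choose a simple submodule $S\subseteq E(M)/M$ with inclusion $\iota\colon S\to E(M)/M$. Surjectivity of $\pi_{*}$ then yields $g\colon S\to E(M)$ with $\pi g=\iota$. Since $\iota\neq 0$ and $S$ is simple, $g$ must be injective, so $g(S)$ is a simple submodule of $E(M)$; essentiality of $M$ in $E(M)$ forces $g(S)\cap M\neq 0$, and simplicity of $g(S)$ then gives $g(S)\subseteq M$, whence $\iota=\pi g=0$, a contradiction. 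The step that requires care is precisely this interaction between essentiality and the simplicity of $g(S)$ — that is where the proof really lives, while everything preceding it is bookkeeping with the long exact sequence.
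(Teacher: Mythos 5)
Your proof is correct, and every step checks out: the identification of simples with $R/I$ for maximal right ideals $I$, the collapse of the long exact sequence $\Hom(S,E(M))\to\Hom(S,E(M)/M)\to\Ext^{1}(S,M)\to\Ext^{1}(S,E(M))=0$, and the essentiality argument showing a lifted simple $g(S)$ must land inside $M$. Note, however, that the paper itself offers no proof to compare against: this proposition is quoted verbatim from Crivei's paper (cited as Theorem~3 of \emph{$m$-injective modules}), so your argument serves as a self-contained substitute for the external reference; it is the natural argument, routing $m$-injectivity through $\Ext^{1}$-vanishing and then exploiting that $M$ is both neat and essential in $E(M)$, which is presumably close to the original source's reasoning.
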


\begin{proposition}\label{neat-flatflat} Assume that every  neat-flat right $R$-module is flat. Then the following are hold.
\begin{enumerate}
\item[(1)]Every $m$-injective right $R$-module is absolutely pure.
\item[(2)]For every right $R$-module $M$, $M$ is max-flat if and only if $M$ is flat.
\end{enumerate}
\end{proposition}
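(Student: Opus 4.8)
The plan is to derive both statements from Proposition \ref{max-injective}, the standing hypothesis, and the observation following Theorem \ref{thm.neatflat=simpleprojective} that every module of zero socle is neat-flat.

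For (1), I would start with an $m$-injective right $R$-module $M$. By Proposition \ref{max-injective} we have $\Soc(E(M)/M)=0$, so $E(M)/M$ is neat-flat by the observation just cited, and hence flat by the hypothesis. Now consider the short exact sequence $0\to M\to E(M)\to E(M)/M\to 0$. Because its cokernel is flat, tensoring with an arbitrary left $R$-module $L$ leaves it exact, since the obstruction $\Tor_{1}(E(M)/M,L)$ vanishes; thus $M$ is a pure submodule of $E(M)$. As $E(M)$ is the injective hull of $M$, this says precisely that $M$ is absolutely pure, completing (1).

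For (2), one implication is free: a flat module $M$ has $\Tor_{1}(M,R/I)=0$ for every left ideal $I$, so it is max-flat. For the converse I would dualize. If $M$ is max-flat then $M^{+}$ is $m$-injective, by the equivalence recorded before Theorem \ref{characterization}. Applying the argument of (1) to $M^{+}$ yields that $M^{+}$ is absolutely pure, i.e. $M^{+}$ is pure in $E(M^{+})$. Since the character module $M^{+}$ is always pure-injective, this pure embedding splits, and as $M^{+}$ is essential in $E(M^{+})$ we conclude $M^{+}=E(M^{+})$, so $M^{+}$ is injective. Finally $M^{+}$ injective is equivalent to $M$ being flat, which gives max-flat $\Rightarrow$ flat.

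The purity-from-flat-cokernel step and the passage ``pure-injective plus absolutely pure implies injective'' are standard, so I expect the only delicate point to be the appeal to (1) for the module $M^{+}$. Statement (1) was obtained for right modules using the hypothesis on right modules, whereas $M^{+}$ is a left module; the same socle argument requires that $E(M^{+})/M^{+}$, which has zero socle and is therefore neat-flat as a left module, be flat. I regard verifying this left-handed instance --- equivalently, that the hypothesis ``neat-flat implies flat'' is available on the side of $M^{+}$ --- as the main obstacle, and I would either establish it directly by repeating the socle argument or reduce it, through the duality of Theorem \ref{characterization}, to the right-handed behaviour of $M$ itself.
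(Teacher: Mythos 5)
Your proposal coincides, step for step, with the paper's own proof. Part (1) is identical: $\Soc(E(M)/M)=0$ by Proposition \ref{max-injective}, hence $E(M)/M$ is neat-flat (zero socle), hence flat by the hypothesis, and the vanishing of $\Tor_1(E(M)/M,\,L)$ for every left module $L$ makes $M$ pure in $E(M)$, i.e.\ absolutely pure. Part (2) is also exactly the paper's chain: $M$ max-flat $\Rightarrow$ $M^{+}$ $m$-injective $\Rightarrow$ $M^{+}$ absolutely pure by (1) $\Rightarrow$ $M^{+}$ injective, since $M^{+}$ is pure-injective and pure in $E(M^{+})$ $\Rightarrow$ $M$ flat by the character-module criterion \cite[Theorem 3.52]{Rotman:HomologicalAlgebra}.

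The ``main obstacle'' you single out is real, and it is worth saying plainly: the paper does not address it. Its proof of (2) reads ``Then $M^{+}$ is $m$-injective, and so it is absolutely pure by (1)'', applying (1) to the \emph{left} module $M^{+}$, even though (1) is stated for right modules and proved from a hypothesis about right modules. The socle argument itself is side-symmetric --- the left-module versions of Proposition \ref{max-injective} and of ``zero socle implies neat-flat'' are equally valid --- but its final step needs ``every neat-flat \emph{left} $R$-module is flat'', which the stated hypothesis does not supply. Nor can one shortcut entirely on the right: max-flatness of $M$ is a $\Tor$-vanishing condition against simple \emph{left} modules, while neat-flatness of $M$ is a lifting condition against simple \emph{right} modules, so there is no visible implication from the former to the latter that would let the right-handed hypothesis act on $M$ directly. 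The honest conclusion is that your proposal is exactly as complete as the paper's own proof: both require the standing hypothesis to be read two-sidedly (or imposed additionally on left modules) in order to justify part (2), and your write-up has the merit of making this explicit rather than passing over it.
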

\begin{proof}\begin{sloppypar} (1) Let $M$ be an $m$-injective right $R$-module. By Proposition \ref{max-injective}, $\Soc(E(M)/M)=0$, and so $E(M)/M$ is neat-flat. Then $E(M)/M$ is flat by our hypothesis. Hence $M$ is a pure submodule of $E(M)$, and so $M$ is an absolutely pure module.

(2) Assume  $M$ is a max-flat right $R$-module. Then $M^{+}$ is
$m$-injective, and so it is absolutely pure by (1). But $M^{+}$ pure
injective by \cite[Proposition 5.3.7]{Relativehomologicalalgebra},
so $M^{+}$ is injective. Then $M$ is flat by \cite[Theorem
3.52]{Rotman:HomologicalAlgebra}. The converse statement is clear.
\end{sloppypar}
\end{proof}

\begin{theorem}\cite[Theorem 1]{Flatandprojectivecharactermodules}\label{coherentring} The following statements are equivalent:
\begin{enumerate}
\item[(1)] $R$ is a right coherent ring.
\item[(2)] $M_{R}$ is absolutely pure if and only if $M^{+}$ is a flat module.
\item[(3)] $M_{R}$ is absolutely pure if and only if $M^{+ +}$ is an injective left R-module.
\item[(4)] $_{R}M$ is flat if and only if $M ^{+ +}$ is a flat left R-module.
\end{enumerate}
\end{theorem}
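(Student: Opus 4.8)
The plan is to prove the cycle $(1)\Rightarrow(2)\Rightarrow(3)\Rightarrow(4)\Rightarrow(1)$, isolating the single point at which coherence is genuinely needed. First I would assemble the ring-independent tools: Lambek's criterion that a module $N$ is flat if and only if $N^{+}$ is injective (\cite[Theorem 3.52]{Rotman:HomologicalAlgebra}); the canonical pure embedding $M\hookrightarrow M^{++}$ of Proposition \ref{prop. M is pure in M++}, together with the fact that a pure submodule of a flat module is flat; the exactness and faithfulness of $(-)^{+}$ (so that $A^{+}=0$ forces $A=0$); the standard isomorphism $\Tor_{1}^{R}(N,M^{+})\cong\Ext_{R}^{1}(N,M)^{+}$ for finitely presented $N$ (the right-module analogue of Lemma \ref{ding2.7} with $S=\Z$, $C=\Q/\Z$); and the flatness test that a left module $L$ is flat if and only if $\Tor_{1}^{R}(R/I,L)=0$ for every finitely generated right ideal $I$.

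With these in hand, most implications are formal. For $(1)\Rightarrow(2)$, the direction ``$M^{+}$ flat $\Rightarrow M$ absolutely pure'' holds over \emph{any} ring: flatness gives $\Tor_{1}(N,M^{+})=0$, hence $\Ext^{1}(N,M)^{+}=0$, hence $\Ext^{1}(N,M)=0$ by faithfulness, for every finitely presented $N$. The reverse direction is exactly where coherence enters: if $M$ is absolutely pure then $\Tor_{1}(R/I,M^{+})\cong\Ext^{1}(R/I,M)^{+}=0$ for each finitely generated right ideal $I$, \emph{provided} $R/I$ is finitely presented, which is guaranteed precisely by right coherence; the flatness test then yields $M^{+}$ flat. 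Next, $(2)\Leftrightarrow(3)$ is free, since applying Lambek's criterion to the left module $M^{+}$ shows that ``$M^{+}$ flat'' and ``$M^{++}$ injective'' are equivalent for every $M$. Finally $(3)\Rightarrow(4)$ is also formal: if $B$ is a flat left module then $B^{+}$ is an injective, hence absolutely pure, right module, so (3) applied to $B^{+}$ gives $B^{+++}$ injective, whence $B^{++}$ is flat by Lambek; conversely $B^{++}$ flat forces $B$ flat because $B\hookrightarrow B^{++}$ is pure.

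The main obstacle is the closing implication $(4)\Rightarrow(1)$, the only one that must manufacture coherence rather than merely transport it. Here I would invoke Chase's classical characterization: $R$ is right coherent if and only if every direct product of flat left $R$-modules is flat, equivalently $R^{\Lambda}$ is a flat left module for every index set $\Lambda$. The difficulty is that hypothesis (4) only controls modules of the special form $B^{++}$, so one must realize the relevant products as double character modules. The mechanism is that $(-)^{+}$ converts direct sums into direct products: from a flat left free module $R^{(\Lambda)}$ one obtains via (4) that $(R^{(\Lambda)})^{++}=\big(\prod_{\Lambda}(_{R}R)^{+}\big)^{+}$ is flat, and the crux is to leverage flatness of double duals of free modules to force each $R^{\Lambda}$ to be flat. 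Concretely, were $R$ not right coherent there would be a finitely generated right ideal $I$ with $\Tor_{1}(R/I,R^{\Lambda})\neq0$ for some $\Lambda$; the task is to push this non-vanishing through the pure embeddings and the sum-to-product behaviour of $(-)^{+}$ to produce a flat left module whose double dual is not flat, contradicting (4). Carrying out this transfer cleanly — tracking the left/right bimodule sides and pinning down which natural maps between $\big(\bigoplus\big)^{+}$ and $\prod(-)^{+}$ are isomorphisms — is the genuinely technical step, and is where I would concentrate the detailed work, following Chase's and Stenström's analysis of coherence.
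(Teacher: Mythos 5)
The paper itself offers no proof of this statement: it is imported verbatim from Cheatham and Stone (\cite[Theorem 1]{Flatandprojectivecharactermodules}), so there is no internal argument to compare yours against, and your proposal must stand on its own. Most of it does: your $(1)\Rightarrow(2)$, $(2)\Leftrightarrow(3)$ and $(3)\Rightarrow(4)$ are correct and essentially complete (Lambek's criterion, purity of $M\hookrightarrow M^{++}$, and the flatness of pure submodules of flat modules are exactly the right tools). The genuine gap is $(4)\Rightarrow(1)$: you identify Chase's theorem as the target and correctly observe that $(-)^{+}$ turns sums into products, but you then stop precisely where the proof must happen --- you never actually produce, from hypothesis (4), the flatness of the products $R^{\Lambda}$, and your proposed contradiction scheme (``push the non-vanishing of $\Tor_{1}(R/I,R^{\Lambda})$ through the pure embeddings'') is left entirely unexecuted. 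Since this is the only implication that manufactures coherence, the cycle is not closed and the theorem is not proved.

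The missing step is in fact short, but it needs two standard facts your outline never invokes: (i) the character dual of a pure exact sequence is \emph{split} exact, and (ii) a direct product of pure embeddings is pure. Granting these: $R^{(\Lambda)}$ is flat, so by (4) the module $(R^{(\Lambda)})^{++}=\bigl(\prod_{\Lambda}R^{+}\bigr)^{+}$ is flat; the canonical embedding $\bigoplus_{\Lambda}R^{+}\hookrightarrow\prod_{\Lambda}R^{+}$ is pure, so by (i) its dual $\bigl(\prod_{\Lambda}R^{+}\bigr)^{+}\to\bigl(\bigoplus_{\Lambda}R^{+}\bigr)^{+}\cong\prod_{\Lambda}R^{++}$ is a split epimorphism, making $\prod_{\Lambda}R^{++}$ a direct summand of a flat module, hence flat; by (ii) the embedding $R^{\Lambda}\hookrightarrow\prod_{\Lambda}R^{++}$ is pure, so $R^{\Lambda}$ is flat, and Chase's theorem gives right coherence. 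One further caution: the ``standard isomorphism'' $\Tor_{1}(N,M^{+})\cong\Ext^{1}(N,M)^{+}$ is stated in Lemma \ref{ding2.7} for \emph{2-presented} $N$, not merely finitely presented $N$; your uses survive because under coherence $R/I$ is 2-presented, and in the ``over any ring'' direction one only needs the natural surjection $\Tor_{1}(N,M^{+})\to\Ext^{1}(N,M)^{+}$, which does hold for finitely presented $N$ --- but as written the citation does not cover your claim.
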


\begin{proposition}\label{neat-flatflatandfpsimple} Consider the following statements.
\begin{enumerate}
\item[(1)] Every neat-flat right $R$-module is flat, and every simple right $R$-module is finitely presented.
\item[(2)]$M$ is an $m$-injective right $R$-module if and only if $M^{+}$ is a flat left $R$-module.
   \item[(3)] $R$ is a right coherent ring, and $M$ is an $m$-injective right $R$-module if and only if $M$ is an absolutely pure right $R$-module.

\end{enumerate}
Then $(1)\Rightarrow (2)\Leftrightarrow(3).$
\end{proposition}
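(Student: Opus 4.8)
The plan is to establish the two easy implications $(1)\Rightarrow(2)$ and $(3)\Rightarrow(2)$ first, and then the substantial converse $(2)\Rightarrow(3)$. Throughout I will lean on three standing tools: the character-module duality that a (left or right) module $L$ is flat if and only if $L^{+}$ is injective (\cite[Theorem 3.52]{Rotman:HomologicalAlgebra}); the fact that every module is a pure submodule of its pure injective double dual $M^{++}$ (Proposition \ref{prop. M is pure in M++}); and, where the hypothesis that every simple right $R$-module is finitely presented is available, the invariance that $M$ is $m$-injective if and only if $M^{++}$ is $m$-injective (Theorem \ref{characterization}(2)). The recurring move is to translate a statement about $M$ or $M^{+}$ into one about the pure injective module $M^{++}$, where absolute purity is strong enough to force injectivity.

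For $(1)\Rightarrow(2)$ I would argue along the chain
\[
M \text{ is } m\text{-injective} \iff M^{++}\text{ is }m\text{-injective} \iff M^{++}\text{ is injective} \iff M^{+}\text{ is flat}.
\]
The first equivalence is Theorem \ref{characterization}(2), valid since every simple right $R$-module is finitely presented. For the middle equivalence, injective trivially implies $m$-injective; conversely, if $M^{++}$ is $m$-injective then it is absolutely pure by Proposition \ref{neat-flatflat}(1) (here I use that every neat-flat right $R$-module is flat), and since $M^{++}$ is pure injective (Proposition \ref{prop. M is pure in M++}) its pure embedding into $E(M^{++})$ splits, so $M^{++}$ is injective. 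The last equivalence is the character duality applied to the left module $M^{+}$, namely $M^{+}$ is flat iff $(M^{+})^{+}=M^{++}$ is injective.

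The implication $(3)\Rightarrow(2)$ is immediate: if $R$ is right coherent then $M$ is absolutely pure iff $M^{+}$ is flat by Theorem \ref{coherentring}, and composing this with the equivalence ``$m$-injective iff absolutely pure'' from $(3)$ yields $(2)$. For $(2)\Rightarrow(3)$ the first task is to extract right coherence from $(2)$ alone. I would verify the criterion of Theorem \ref{coherentring}(4): for a left module $L$, if $L$ is flat then $L^{+}$ is injective, hence $m$-injective, so by $(2)$ applied to the right module $L^{+}$ the double dual $(L^{+})^{+}=L^{++}$ is flat; conversely if $L^{++}$ is flat then $L$, being a pure submodule of it (Proposition \ref{prop. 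M is pure in M++}), is flat as well. Thus $L$ is flat iff $L^{++}$ is flat, so $R$ is right coherent. Once coherence is in hand, Theorem \ref{coherentring} again gives ``$M$ absolutely pure iff $M^{+}$ flat,'' and combining this with $(2)$ produces ``$M$ is $m$-injective iff $M$ is absolutely pure,'' completing $(3)$.

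The main obstacle is the $(2)\Rightarrow(3)$ direction, and specifically the implication ``absolutely pure $\Rightarrow$ $m$-injective'' hidden inside it. A direct attack via Proposition \ref{max-injective} would try to deduce $\Soc(E(M)/M)=0$ from purity of $M$ in $E(M)$, but this appears to force the simple quotients of $E(M)/M$ to be finitely presented, which is \emph{not} among the hypotheses of $(2)$. The plan circumvents this entirely by first deriving coherence through the double-dual and purity argument above and only then invoking Theorem \ref{coherentring}; the delicate point is therefore to recognize that $(2)$ already encodes the flatness-invariance $L$ flat $\iff$ $L^{++}$ flat characterizing right coherent rings, after which the problematic implication becomes automatic.
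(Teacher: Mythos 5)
Your proof is correct, but it handles statement (1) by a genuinely different route than the paper. The paper never proves $(1)\Rightarrow(2)$ directly: it proves $(1)\Rightarrow(3)$, by first observing that under (1) the $m$-injective and absolutely pure right modules coincide (Proposition \ref{neat-flatflat}(1) gives one inclusion; the other follows because absolute purity kills $\Ext^{1}(R/I,M)$ for maximal $I$ once simples are finitely presented), and then running the chain: $M$ absolutely pure iff $M$ $m$-injective iff $M^{+}$ max-flat (Theorem \ref{characterization}(1), mis-cited in the paper as (2)) iff $M^{+}$ flat (Proposition \ref{neat-flatflat}(2)), which is exactly the Cheatham--Stone criterion (Theorem \ref{coherentring}(2)) for right coherence. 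You instead prove $(1)\Rightarrow(2)$ directly via the chain $M$ $m$-injective iff $M^{++}$ $m$-injective (Theorem \ref{characterization}(2)) iff $M^{++}$ injective (absolute purity from Proposition \ref{neat-flatflat}(1) plus pure injectivity of character modules) iff $M^{+}$ flat (Lambek duality); this avoids max-flat modules and Proposition \ref{neat-flatflat}(2) entirely, needs only part of the hypotheses at each step, and extracts coherence just once, inside $(2)\Rightarrow(3)$, whereas the paper in effect establishes coherence twice (via criterion (2) of Theorem \ref{coherentring} under (1), and via criterion (4) under (2)). Your $(2)\Leftrightarrow(3)$ argument --- coherence from ``$L$ flat iff $L^{++}$ flat'' using Lambek, purity of $L$ in $L^{++}$, and Theorem \ref{coherentring}(4), then combining coherence with (2) to get the absolutely pure/$m$-injective equivalence --- is essentially identical to the paper's, including the implicit step that injective character modules are in particular $m$-injective so that (2) can be applied. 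Since both texts prove $(2)\Leftrightarrow(3)$, the choice of proving $(1)\Rightarrow(2)$ rather than $(1)\Rightarrow(3)$ is immaterial to the final statement; your version is arguably the leaner of the two.
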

\begin{proof}
$(1)\Rightarrow (3)$ By Proposition \ref{neat-flatflat}(1), every
$m$-injective right $R$-module is absolutely pure. On the other
hand, every absolutely pure right $R$-module is $m$-injective since
every simple right $R$-module is finitely presented by (1). Then,
for every right $R$-module $M$, $M$ is absolutely pure if and only
if $M$ is $m$-injective, if and only if $M^{+}$ is max-flat by
Theorem \ref{characterization}(2), if and only if $M^{+}$ is a flat
module by Proposition \ref{neat-flatflat}(2). Hence $R$ is a right
coherent ring by \cite[Theorem
1]{Flatandprojectivecharactermodules}. This proves (3).

$(2)\Rightarrow(3)$ Let $M$ be a left $R$-module. We claim that, $M$
is a flat  $R$-module if and only if $M^{++}$ is a flat module.
 If $M$ is flat, then $M^{+}$ is injective by \cite[Theorem 3.52]{Rotman:HomologicalAlgebra}, and so $M^{++}$ is flat left $R$-module by (2). Conversely, if $M^{++}$ is a flat module, then $M$ is flat since $M$ is a pure submodule of $M^{++}$ by Proposition \ref{prop. M is pure in M++} and flat modules are closed under pure submodules (see, \cite[Corollary 4.86]{Lam:lecturesonmodulesandrings}). So $R$ is a right coherent ring by Theorem \ref{coherentring}. The last part of (3) follows by (2) and Theorem \ref{coherentring} again.

$(3)\Rightarrow(2)$ By Theorem \ref{coherentring}.
\end{proof}

\begin{proposition}\label{characterofneatflat}Let $R$ be a ring such that every simple right $R$-module is finitely presented.
The following statements are equivalent:
\begin{enumerate}
\item[(1)]$M$ is an absolutely pure left $R$-module if and only if $ \Ext_{R}^{1}(Tr(S), M)=0$ for each simple right $R$-module $S$.
\item[(2)]$M$ is a flat right $R$-module if and only if $M$ is a neat-flat $R$-module.
\end{enumerate}
\end{proposition}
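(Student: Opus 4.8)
The plan is to reduce each of the two biconditionals to its nontrivial direction and then to pass between right and left modules using character duality. In statement (1), the implication that an absolutely pure $M$ satisfies $\Ext_{R}^{1}(Tr(S),M)=0$ is automatic, because $Tr(S)$ is finitely presented whenever $S$ is; in statement (2), the implication ``flat $\Rightarrow$ neat-flat'' is automatic under the standing hypothesis, since then pure submodules are neat and hence flat modules are neat-flat. So it suffices to prove that the two remaining implications are equivalent: call them $(1')$, ``$\Ext_{R}^{1}(Tr(S),M)=0$ for all simple right $R$-modules $S$ implies the left module $M$ is absolutely pure'', and $(2')$, ``every neat-flat right $R$-module $M$ is flat''. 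The two tools I would use repeatedly are the finiteness-free isomorphism $\Ext_{R}^{1}(Tr(S),M^{+})\cong\Tor_{1}(M,Tr(S))^{+}$ (the standard isomorphism already invoked for max-flat modules) and the isomorphism $\Ext_{R}^{1}(Tr(S),M)^{+}\cong\Tor_{1}(M^{+},Tr(S))$ coming from Lemma \ref{ding2.7}, which applies because $Tr(S)$ is $2$-presented; this is exactly the isomorphism used in Corollary \ref{characterofabsolutelypure}.

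For $(2')\Rightarrow(1')$, I would take a left $R$-module $M$ with $\Ext_{R}^{1}(Tr(S),M)=0$ for all simple right $R$-modules $S$. The Lemma \ref{ding2.7} isomorphism then gives $\Tor_{1}(M^{+},Tr(S))=0$ for all $S$, so $M^{+}$ is a neat-flat right $R$-module by Theorem \ref{charecteronscoherent}; applying $(2')$, the module $M^{+}$ is flat, and hence $M^{++}$ is an injective left $R$-module by \cite[Theorem 3.52]{Rotman:HomologicalAlgebra}. Since $M$ is a pure submodule of $M^{++}$ by Proposition \ref{prop. M is pure in M++} and the injective hull $E(M)$ embeds in $M^{++}$ over $M$, it follows that $M$ is pure in $E(M)$, that is, $M$ is absolutely pure.

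For $(1')\Rightarrow(2')$, I would take a neat-flat right $R$-module $M$, so that $\Tor_{1}(M,Tr(S))=0$ for all $S$ by Theorem \ref{charecteronscoherent}. The standard isomorphism then yields $\Ext_{R}^{1}(Tr(S),M^{+})\cong\Tor_{1}(M,Tr(S))^{+}=0$, whence the left module $M^{+}$ is absolutely pure by $(1')$. As a character module $M^{+}$ is pure-injective by \cite[Proposition 5.3.7]{Relativehomologicalalgebra}, and an absolutely pure pure-injective module is injective; therefore $M^{+}$ is injective and $M$ is flat by \cite[Theorem 3.52]{Rotman:HomologicalAlgebra}. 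I expect the only real care to be in the bookkeeping of the two distinct duality isomorphisms — the finiteness-free one feeding $\Ext(-,M^{+})$ and the Lemma \ref{ding2.7} one feeding $\Ext(-,M)^{+}$, the latter relying on $2$-presentedness of $Tr(S)$ — and in keeping the left/right sides straight so that the equivalences ``flat $\Leftrightarrow$ character module injective'' and ``absolutely pure $+$ pure-injective $\Rightarrow$ injective'' are applied on the correct module.
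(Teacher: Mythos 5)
Your proposal is correct, and it diverges from the paper's proof in one worthwhile place. Your treatment of $(1')\Rightarrow(2')$ is exactly the paper's proof of $(1)\Rightarrow(2)$: Theorem \ref{charecteronscoherent} gives $\Tor_{1}(M,Tr(S))=0$, the standard isomorphism gives $\Ext^{1}(Tr(S),M^{+})\cong\Tor_{1}(M,Tr(S))^{+}=0$, then $(1')$ makes $M^{+}$ absolutely pure, and absolutely pure $+$ pure-injective forces $M^{+}$ injective, whence $M$ is flat by \cite[Theorem 3.52]{Rotman:HomologicalAlgebra}. The genuine difference is in $(2')\Rightarrow(1')$. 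After the common opening (Lemma \ref{ding2.7} yields $\Tor_{1}(M^{+},Tr(S))=0$, Theorem \ref{charecteronscoherent} makes $M^{+}$ neat-flat, and $(2')$ makes it flat), the paper finishes by invoking Proposition \ref{neat-flatflatandfpsimple} to conclude that $R$ is right coherent and then citing Theorem \ref{coherentring} to pass from ``$M^{+}$ flat'' to ``$M$ absolutely pure''. You instead argue directly: $M^{++}$ is injective by \cite[Theorem 3.52]{Rotman:HomologicalAlgebra}, $M$ is pure in $M^{++}$ by Proposition \ref{prop. M is pure in M++}, and since purity passes down to any intermediate submodule, $M$ is pure in the copy of $E(M)$ lying between $M$ and $M^{++}$, i.e.\ $M$ is absolutely pure. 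Your ending is more self-contained (no coherence is needed), and it has a further advantage: Theorem \ref{coherentring}(2) as quoted concerns right modules $M_{R}$ and their character modules, whereas the $M$ of statement (1) is a left module, so the paper's final citation really calls for the left-handed (left coherent) version of Cheatham--Stone; your argument is in effect the coherence-free proof of precisely the implication needed, and it keeps all modules on the correct sides. Both proofs share the appeal to Lemma \ref{ding2.7} with $n=2$, i.e.\ to $Tr(S)$ being $2$-presented, a point you at least flag explicitly while the paper leaves it tacit.
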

 \begin{proof}\begin{sloppypar}
$(1)\Rightarrow (2)$ Let $M$ be a neat-flat right $R$-module. Then
$\Tor_{1}(M,\,Tr(S))=0$ for each simple right $R$-module $S$ by
Theorem \ref{charecteronscoherent}. By the standard adjoint
isomorphism we have, $\Ext^{1}(Tr(S), M^{+}) \cong \Tor_{1}(
M,Tr(S))^{+}=0$. Then $M^{+}$ is absolutely pure left $R$-module by
(1). But $M^{+}$ pure injective, so $M^{+}$ is injective. Then $M$
is flat by \cite[Theorem 3.52]{Rotman:HomologicalAlgebra}. The
converse is clear.

$(2)\Rightarrow (1)$ Let $M$ be a $R$-module such that $
\Ext^{1}(Tr(S), M)=0$ for each simple $R$-module $S$.
 Then, by  Lemma \ref{ding2.7}, $0= \Ext^{1}(Tr(S), M)^{+}=\Tor_{1}(M^{+},Tr(S))$.
 So, $M^{+}$ is neat-flat by Theorem \ref{charecteronscoherent}, and it is flat by (2).
 But $R$ is right coherent by Proposition \ref{neat-flatflatandfpsimple}, so $M$ is absolutely pure by Theorem \ref{coherentring}. The converse is clear.
\end{sloppypar}
\end{proof}




\centerline{\bf{Acknowledgments}}

Some part of this paper was written while the second author was
visiting Padua University, Italy. He wishes to thank the members of
the Department of Mathematics for their kind hospitality and the
Scientific and Technical Research Council of Turkey (T\"{U}B\.ITAK)
for their financial support.

\begin{bibdiv}
\begin{biblist}

\bib{Wisbauer-et.al:t-complementedandt-supplementedmodules}{article}{
title={{$\tau$}-complemented and {$\tau$}-supplemented modules}
author={K. Al-Takhman and C. Lomp and R. Wisbauer.} journal={Algebra
Discrete Math.} volume={3} date={2006} pages={1--16}}

\bib{AF}{book}{
title={Rings and Categories of Modules},
author={Anderson, I.}
author={Fuller, K.},
date={1992},
publisher={Springer-Verlag},
address={New York}}

\bib{auslenderbridge:stablemoduletheory}{book}{
title={Stable module theory} author={Lomp, J. C.} author={ M.
Auslander and M. Bridger}, date={1969}, series={Memoirs of the
American Mathematical Society, No. 94}, publisher={American
Mathematical Society}, address={Providence, R.I.}}

\bib{Buchsbaum}{article}{
title={A note on homology in categories}, author={Buchsbaum, D. A.},
journal={Ann. of Math.}, volume={69(2)}, date={1959},
pages={66--74}}

\bib{Flatandprojectivecharactermodules}{article}{
title={Flat and projective character modules}, author={T. J.
Cheatham and  D. R. Stone}, journal={Proc. Amer. Math. Soc.},
volume={81(2)}, date={1981}, pages={175--177}}

\bib{ding:onncoherentrings}{article}{
title={On n-coherent rings}, author={J. Chen and  N. Ding},
journal={Comm. Algebra }, volume={24(10)}, date={1996},
pages={3211--3216}}

\bib{Clark-Smith:OnSemiartinianandinjectivityconditions}{article}{
title={On semi-{A}rtinian modules and injectivity conditions},
author={J. Clark and P. F. Smith}, journal={Proc. Edinburgh Math.
Soc. }, volume={39(2)}, date={1996}, pages={263--270}}

\bib{septimi:minjective}{article}{ title={{$m$}-injective modules}, author={S. Crivei}, journal={Mathematica},
volume={40(63)}, date={1998}, pages={71--378}}

\bib{extendingmodules}{book}{
title={Extending modules}, author={N. V. Dung and D. V. Huynh and P.
F. Smith and R. Wisbauer}, date={1994}, series={Pitman Research
Notes in Mathematics Series}, publisher={Longman Scientific \&
Technical}, address={Harlow}}

\bib{Relativehomologicalalgebra}{book}{
title={Relative homological algebra}, author={E. E. Enochs and O. M.
G. Jenda}, date={2000}, series={de Gruyter Expositions in
Mathematics}, publisher={Walter de Gruyter \& Co.},
address={Berlin}}

\bib{Fuchs:NeatSubmodulesOverIntegralDomain}{article}{ title={Neat submodules over integral domains},
author={L. Fuchs}, journal={Period. Math. Hungar. }, volume={64(2)},
date={2012}, pages={131--143}}

\bib{generalov}{article}{ title={Weak and {$\omega $}-high purities in the category of modules},
author={A. I. Generalov}, journal={Mat. Sb. (N.S.) },
volume={105(3)}, date={1978}, pages={389--402}}

\bib{Goodearl:vonneumannregularrings}{book}{
title={von {N}eumann regular rings}, author={K. R. Goodearl},
 date={1979}, series = {Monographs and
Studies in Mathematics}, publisher={Pitman (Advanced Publishing
Program)}, address={Boston, Mass.}}

\bib{Lam:lecturesonmodulesandrings}{book}{
title={Lectures on modules and rings}, author={T. Y. Lam},
date={1999}, series={Graduate Texts in Mathematics},
publisher={Springer-Verlag}, address={New York}}

\bib{maclane:homology}{book}{
title={Homology}, author={Maclane, S.}, date={1963}, series={Chicago
Lectures in Mathematics}, publisher={Springer-Verlag},
address={Chicago-London}}

\bib{mao:whendoeseverysimplehaveaprojectiveenvelope}{article}{
title={When does every simple module have a projective envelope?},
author={L. Mao}, journal={Comm. Algebra}, volume={35}, date={2007},
pages={1505--1516}}

\bib{Mermut:Ph.D.tezi}{book}{
title={Homological Approach to Complements and Supplements},
author={E. Mermut}, date={2004}, publisher={Dokuz Eyl{{\"{u}}}l
University, The Graduate School of Natural and
    Applied Sciences,\.{I}zmir/TURKEY}}

\bib{mishina:abeliangroupsandmodules}{book}{
title={Abelian Groups and Modules}, author={Mishina, A. P.}
author={Skornyakov, L. A.}, date={1960}, publisher={Amer.
Mathematical Society}, address={Chicago-London}}

\bib{oshiro}{article}{
title={Lifting modules, extending modules and their applications to
              {QF}-rings}, author={K. Oshiro},
journal={Hokkaido Math. J.}, volume={13}, date={1984},
pages={310--338}}

\bib{Ozdemir:Ph.D.tezi}{book}{
title={Rad-supplemented Modules and Flat Covers of Quivers},
author={S. \"{O}zdemir}, date={2011}, publisher={Dokuz Eyl{{\"{u}}}l
University, The Graduate School of Natural and
    Applied Sciences,\.{I}zmir/TURKEY}}

\bib{rada}{article}{
title={Projective envelopes of finitely generated modules},
author={R. Parra and J. Rada}, journal={Algebra Colloq.},
volume={18}, date={2011}, pages={801--806}}

\bib{renault:neat}{article}{
title={\'{E}tude des sous-modules compl\'ements dans un module},
author={G. Renault}, journal={Bull. Soc. Math. France M\'em.},
volume={9}, date={1967}}

\bib{Renault:Cring}{article}{
title={{\'{E}}tude de certains anneaux A li{\'{e}}s aux sous-modules
compl\'{e}ments dun a-module.}, author={G. Renault}, journal={C. R.
Acad. Sci. Paris}, volume={259}, date={1964}, pages={4203-4205}}

\bib{Rotman:HomologicalAlgebra}{book}{
title={An Introduction to Homological Algebra}, author={J. Rotman},
date={1979}, publisher={Academic Press}, address={New York}}

\bib{Sklyarenko:RelativeHomologicalAlgebra}{article}{
title={Relative homological algebra in categories of modules},
author={E. G. Sklyarenko}, journal={Russian Math. Surveys},
volume={33(3)}, date={1978}, pages={97--137}}

\bib{smith:Injectivemodulesandprimeideals}{article}{
title={Injective modules and prime ideals}, author={Smith, P. F.},
journal={Comm. Algebra}, volume={9(9)}, date={1981},
pages={989--999}}

\bib{Stenstrom:highsubmodulesandpurity}{article}{
title={High submodules and purity}, author={B. T. Stenstr{\"o}m},
journal={Ark. Mat.}, volume={7}, date={1967}, pages={173--176}}

\bib{Stenstrom:Puresubmodules}{article}{
title={Pure submodules}, author={B. T. Stenstr{\"o}m}, journal={Ark.
Mat.}, volume={7}, date={1967}, pages={159--171}}

\bib{Wang:onmaximalinjective}{article}{
title={On maximal injectivity}, author={M. Y. Wang and G. Zhao},
journal={Acta Math. Sin. (Engl. Ser.)}, volume={21(6)}, date={2005},
pages={1451--1458}}

\bib{Wisbauer:Foundationsofmoduleandringtheory}{book}{
title={Foundations of module and ring theory}, author={R. Wisbauer},
date={1991}, series={Algebra, Logic and Applications},
publisher={Gordon and Breach Science Publishers},
address={Philadelphia, PA}}

\bib{Xiang:maxflatmaxinjective}{article}{
title={Max-injective, max-flat modules and max-coherent rings},
author={Y. Xiang}, journal={Bull. Korean Math. Soc.},
volume={47(3)}, date={2010}, pages={611--622}}

\bib{xu:flatcoverofmodules}{book}{
title={Flat covers of modules}, author={J. Xu}, date={1996},
series={Lecture Notes in Mathematics}, publisher={Springer-Verlag},
address={Berlin}}

\end{biblist}
\end{bibdiv}

\end{document}